\documentclass[11pt]{article}

\topmargin  = 0 in \oddsidemargin = 0.25 in
\setlength{\textheight}{8.6 in} \setlength{\textwidth}{6 in}
\setlength{\topmargin}{-0.8cm}
\setlength{\unitlength}{1.0 mm}     

\usepackage{amsmath}
\usepackage{amsfonts}
\usepackage{color}
\usepackage{float}   
\usepackage{amssymb}
\usepackage{bm}    
\usepackage{graphicx}   
\usepackage{enumerate}
\usepackage{amsthm,amscd}
\usepackage[all]{xy}
\usepackage{multirow,makecell}
\usepackage{pifont}        
 \usepackage{arydshln}    
 \usepackage{booktabs}  

 \usepackage{tikz,tcolorbox}

\makeatletter
\def\hlinew#1{%
  \noalign{\ifnum0=`}\fi\hrule \@height #1 \futurelet
   \reserved@a\@xhline}
\makeatother       

 \allowdisplaybreaks

 \linespread{1.05}   

\usepackage{hyperref}

\newtheorem{theorem}{Theorem}[section]

\newtheorem{conjecture}[theorem]{Conjecture}

\newtheorem{problem}[theorem]{Problem}
\newtheorem{lemma}[theorem]{Lemma}

\begin{document}

\title{A spectral Erd\H{o}s-Rademacher theorem\thanks{
This paper was published on 
Advances in Applied Mathematics 158 (2024), No. 102720. 
This is the final version; see \url{https://doi.org/10.1016/j.aam.2024.102720}. 
The research was supported by the NSFC grant 11931002 and 12371362.
E-mail addresses: \url{ytli0921@hnu.edu.cn} (Y. Li),
\url{lulugdmath@163.com} (L. Lu),
\url{ypeng1@hnu.edu.cn} (Y. Peng, corresponding author).}  }

\author{Yongtao Li$^{\dag}$, Lu Lu$^{\dag}$, Yuejian Peng$^{\ddag}$ \\[2ex]
{\small $^{\dag}$School of Mathematics and Statistics, Central South University} \\
{\small Changsha, Hunan, 410083, P.R. China}\\
{\small $^{\ddag}$School of Mathematics, Hunan University} \\
{\small Changsha, Hunan, 410082, P.R. China }
}

\maketitle

\vspace{-0.5cm}

\begin{abstract}
A classical result of Erd\H{o}s and Rademacher (1955)  indicates a supersaturation phenomenon. It says that if $G$ is a graph on $n$ vertices with at least $\lfloor {n^2}/{4} \rfloor +1$ edges,
then $G$ contains at least $\lfloor {n}/{2}\rfloor$ triangles. We prove a spectral version of Erd\H{o}s--Rademacher's theorem. 
Moreover, Mubayi [Adv. Math. 225 (2010)] extends the result of  Erd\H{o}s and Rademacher from a triangle  to any  color-critical graph. It is interesting to  
study the extension of Mubayi from a spectral perspective. However,  it is not apparent to measure the increment on the spectral radius of a graph comparing to the traditional edge version (Mubayi's result). In this paper, we provide a way to measure the increment on the spectral radius of a graph and  propose a spectral version on the counting problems for color-critical graphs.
\end{abstract}

{{\bf Key words:}   Extremal  graph problems; Spectral radius; Counting triangles. }

{{\bf 2010 Mathematics Subject Classification.}  05C50, 05C35.}

\section{Introduction}

We shall use the following standard notation; see, e.g., the monograph \cite{BM2008}. Let $G$ be a simple
 graph with vertex set $V(G)=\{v_1, \ldots, v_n\}$ and edge set $E(G)=\{e_1, \ldots, e_m\}$.
 We write $n$ and $m$ for the number of vertices and edges,
 respectively, and call the order and size of $G$.
Let $N(v)$ or $N_G(v)$ be the set of neighbors of $v$,
and $d(v)$ or $d_G(v)$ be the degree of a vertex $v$ in $G$.
We write $t(G)$ for the number of triangles in $G$.
For a subset $A\subseteq V(G)$, we write $e(A)$ for the
number of edges with two endpoints in $A$, and $G[A]$
for the subgraph of $G$ induced by $A$.
For a vertex  $w\in V(G)$,
we denote by $d_A(w)$
the number of neighbors of $w$ in $A$.
For disjoint sets $A$ and $B$, we write $e(A,B)$ for the number of edges between $A$ and $B$.
  Let $K_n$ be the complete graph on $n$ vertices, and
  $K_{s,t}$ be the complete bipartite graph with parts of sizes
  $s$ and $t$.
 We write  $C_n$ and $P_n$ for the cycle and
 path on $n$ vertices, respectively.
The  {\it Tur\'{a}n graph} $T_{n,r}$
is an $n$-vertex complete $r$-partite graph with each part of size $n_i$ such that
$|n_i-n_j|\le 1$ for all $1\le i,j\le r$, that is, $n_i$ equals
$\lfloor {n}/{r} \rfloor$  or $\lceil {n}/{r}\rceil$.
Sometimes, we  call $T_{n,r}$ the {\it balanced complete $r$-partite graph}.
In particular, we have $T_{n,2}=K_{\lfloor \frac{n}{2} \rfloor, \lceil \frac{n}{2}\rceil}$.

\subsection{Counting substructures for graphs with given size}

A graph $G$ is called $F$-free if it does not contain
  an isomorphic copy of $F$ as a subgraph.
  Apparently, every bipartite graph is $C_{2k+1}$-free
  for every  $k\ge 1$.
   The {\em Tur\'{a}n number} of a graph $F$ is the maximum number of edges  in an $n$-vertex $F$-free graph, and
  it is usually  denoted by $\mathrm{ex}(n, F)$.
  An $n$-vertex $F$-free graph with $\mathrm{ex}(n, F)$ edges is called an {\em extremal graph} for $F$.
The study of Tur\'{a}n problem goes back to
 the work of Mantel  \cite{Man1907}, which states that
 if $G$ is a triangle-free graph on $n$ vertices, then
$  e(G) \le    \lfloor {n^2}/{4} \rfloor $,
and equality holds if and only if  $G$ is the balanced complete bipartite graph $T_{n,2}$.
Mantel's theorem has many interesting applications and generalizations in the literature; see
 \cite[p. 294]{Bollobas78} for standard proofs.
 Inspired by Mantel's theorem,
 the extremal graph theory was widely studied and achieved a rapid development in the past years; see  \cite{FS13,Sim13} for comprehensive surveys.
 In particular, Rademacher (unpublished, see \cite{Erd1955}) extended Mantel's result
 by showing that any graph
 with  $\lfloor n^2/4\rfloor +1$
 edges has not only one triangle, but also at least $\lfloor {n}/{2}\rfloor$ triangles.

 \begin{theorem}[Erd\H{o}s--Rademacher \cite{Erd1955}, 1955]  \label{thmrad}
If $G$ is a graph on $n$ vertices with
\begin{equation*} \label{eq-ER}
  e(G) \ge \left\lfloor \frac{n^2}{4} \right\rfloor +1,
  \end{equation*}
then $G$ contains at least $\lfloor \frac{n}{2}\rfloor$ triangles.
\end{theorem}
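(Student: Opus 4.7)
The plan is to argue by induction on $n$, treating the base cases $n \le 4$ directly by inspection; for instance, when $n = 4$ the hypothesis $e(G) \ge 5$ forces $G$ to be $K_4$ minus at most one edge, which contains at least $2 = \lfloor 4/2 \rfloor$ triangles. For the inductive step I would split into two cases based on the minimum degree $\delta(G)$.

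First, if $\delta(G) \ge \lfloor n/2 \rfloor + 1$, then the standard bound $|N(u) \cap N(v)| \ge d(u) + d(v) - n \ge 1$ shows that every edge of $G$ lies in at least one triangle, so $t(G) \ge e(G)/3 \ge (\lfloor n^2/4 \rfloor + 1)/3$, which easily dominates $\lfloor n/2 \rfloor$. Otherwise $\delta(G) \le \lfloor n/2 \rfloor$, and I would pick a vertex $v$ realising the minimum degree and delete it. A short parity check gives
$$e(G - v) \ge \left\lfloor \frac{n^2}{4} \right\rfloor + 1 - \left\lfloor \frac{n}{2} \right\rfloor \ge \left\lfloor \frac{(n-1)^2}{4} \right\rfloor + 1,$$
so by the inductive hypothesis $G - v$ contains at least $\lfloor (n-1)/2 \rfloor$ triangles. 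For odd $n$ this equals $\lfloor n/2 \rfloor$ and finishes the case.

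The hard part will be the even-$n$ subcase, where the deletion of $v$ delivers only $n/2 - 1$ triangles and an additional triangle through $v$ must be produced. Whenever $N(v)$ spans any edge, that edge together with $v$ is the missing triangle; the delicate case is when $N(v)$ is independent. Here every $u \in N(v)$ satisfies $d(u) \le n - d(v)$, and double counting the edges of $G$ in the decomposition $V(G) = \{v\} \cup N(v) \cup B$, where $B = V(G) \setminus (N(v) \cup \{v\})$, forces $e(G - v) \ge \lfloor (n-1)^2/4 \rfloor + 2$. To exploit this additional edge I would strengthen the inductive statement to the Lov\'asz--Simonovits form, namely that $e(G) \ge \lfloor n^2/4 \rfloor + k$ implies $t(G) \ge k \lfloor n/2 \rfloor$ whenever $1 \le k \le \lfloor n/2 \rfloor$. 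Applied to $G - v$ with $k = 2$, this produces a second triangle in $G-v$ beyond what the bare induction gives, comfortably closing the parity gap. I expect this strengthened induction, together with the analysis of the independent-neighbourhood configuration, to be the main technical hurdle; everything else reduces to the routine degree inequality of the first case or the bookkeeping of the inductive reduction.
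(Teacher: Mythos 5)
The paper cites this theorem from Erd\H{o}s \cite{Erd1955} without reproving it, so there is no in-paper argument to compare against; I will assess the plan directly. The decisive gap is your claim that when the minimum-degree vertex $v$ has an independent neighbourhood, the decomposition $V(G)=\{v\}\cup N(v)\cup B$ forces $e(G-v)\ge\lfloor(n-1)^2/4\rfloor+2$. This fails precisely at the boundary $d(v)=n/2$ (even $n$), which your setup must confront: take $v$ joined to an independent set $N(v)$ of size $n/2$, let $B$ be the remaining $n/2-1$ vertices carrying a single internal edge, and make $N(v)$ complete to $B$. Then $\delta(G)=d(v)=n/2$, $N(v)$ is independent, $e(G)=\lfloor n^2/4\rfloor+1$, yet $e(G-v)=\lfloor(n-1)^2/4\rfloor+1$ exactly, so the promised ``$+2$'' never materialises. (The conclusion is still true for this $G$, since the one edge inside $B$ together with any of the $n/2$ vertices of $N(v)$, each complete to $B$, already yields $n/2$ triangles, but your inductive machinery does not find them.)

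There is also a structural circularity: even when $d(v)<n/2$ and the extra edge is present, applying the Lov\'asz--Simonovits statement to $G-v$ with $k=2$ requires you to have already proved that theorem, and its inductive step for $k\ge2$ --- the genuinely hard part --- is nowhere addressed. Deferring the ``main technical hurdle'' to an unstated and substantially deeper theorem replaces the problem rather than solving it. A cleaner repair for the independent-$N(v)$ subcase is to stop recursing on $G-v$ and count triangles directly via the edges inside $B$: the degree sandwich $\delta(G)\le d(u)\le n-d(v)$ for $u\in N(v)$ pins $d(u)=n/2$ at the boundary, making every $u$ complete to $B$, and a short edge count then shows $e(B)\ge1$, whence each such edge already supplies $n/2$ triangles on its own.
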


To see the sharpness of Theorem \ref{thmrad},
one can consider the graph obtained
by adding one edge to the larger color class of  $T_{n,2}$.
Moreover, $T_{n,2}$ is the unique graph with $\lfloor n^2/4\rfloor$ edges which contains no triangle.
A special case of Erd\H{o}s \cite[Theorem 1]{Erdos1964} reveals that if $e(G) \ge \lfloor n^2/4\rfloor$,
then
\begin{equation} \label{eq-Erdos-1}
t(G) \ge \left\lfloor \frac{n}{2} \right\rfloor -1,
\end{equation}
unless $G$ is the bipartite Tur\'{a}n graph $T_{n,2}$.

\medskip
A graph is called {\it color-critical} if its chromatic  number can be decreased by removing an edge.
This is a large and important class of graphs for which
the Tur\'{a}n number is well understood.
It was shown by Simonovits \cite{Sim1966} that
for every  color-critical  graph $F$ with $\chi (F)=r+1 \ge 3$,
 there exists an integer $n_0=n_0(F)$
such that if $n\ge n_0$ and $G$ is an $n$-vertex $F$-free graph, then
\begin{equation} \label{eq-Sim}
   e(G) \le e(T_{n,r}).
   \end{equation}
Moreover,
the equality holds if and only if $G$ is the
$r$-partite Tur\'{a}n graph $T_{n,r}$.
In 2010, Mubayi \cite{Mubayi2010} extended Simonovits' result by proving the following  counting result.

\begin{theorem}[Mubayi \cite{Mubayi2010}, 2010] \label{thm-Mubayi}
Let  $F$ be a color-critical graph
with $\chi (F)=r+1 \ge 3$. There exists $\delta =\delta (F)>0$ such that
if $n$ is sufficiently large, $1\le q < \delta n$, and
$G$ is an $n$-vertex graph with
\begin{equation*} \label{eq-Mub}  e(G)\ge e(T_{n,r}) +q,
\end{equation*}
then $G$ contains at least $q\cdot c(n,F) $ copies of $F$,
where $c(n,F)$ is the minimum number of copies of $F$
in the graph obtained from $T_{n,r}$ by adding an edge.
\end{theorem}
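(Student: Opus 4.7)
The plan is to follow the stability method pioneered by Erd\H{o}s and Simonovits. First I would apply a stability result to reduce to the case that $G$ is close to $T_{n,r}$. Concretely, the Erd\H{o}s--Simonovits stability theorem says that for any $\varepsilon > 0$ there exists $\delta_0 > 0$ such that any $n$-vertex $F$-free graph with at least $e(T_{n,r}) - \delta_0 n^2$ edges is within edit distance $\varepsilon n^2$ from $T_{n,r}$. If $G$ itself is far from $T_{n,r}$, then every $F$-free subgraph of $G$ has far fewer than $e(T_{n,r})$ edges, so the Erd\H{o}s--Simonovits supersaturation theorem produces $\Omega_F(n^{v(F)})$ copies of $F$ in $G$. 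Since $c(n,F) = \Theta_F(n^{v(F)-2})$ and $q < \delta n$, the required bound $q \cdot c(n,F) = O_F(n^{v(F)-1})$ is easily exceeded in this regime.

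In the remaining case, $G$ is close to $T_{n,r}$. Fix an $r$-partition $V_1 \cup \cdots \cup V_r = V(G)$ that minimizes the number of \emph{bad edges} $b := \sum_{i=1}^{r} e(V_i)$, and let $m$ denote the number of \emph{missing edges}, i.e., non-edges of $G$ joining distinct parts. The extremality of the partition forces it to be nearly balanced, so $|V_i| = n/r + o(n)$; together with the identity $e(G) = \sum_{i<j} |V_i||V_j| + b - m$ and the hypothesis $e(G) \geq e(T_{n,r}) + q$, we deduce $b - m \geq q - O(1)$. Extremality also yields that every vertex has at least as many neighbors in each other part as in its own, which places most vertices into the partition in an essentially ``Tur\'an-like'' way.

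Now let $e^*$ be a color-critical edge of $F$, so that $F - e^*$ admits a proper $r$-coloring $\phi \colon V(F - e^*) \to \{1,\dots,r\}$. For each bad edge $uv$ lying inside some $V_i$, I would count embeddings of $F$ into $G$ that send $e^*$ to $uv$ and send every other vertex $x$ of $F$ to $V_{\phi(x)}$. Because the partition is nearly complete $r$-partite, the number of such embeddings through each bad edge is at least $c(n,F)(1-o(1))$, matching to leading order the minimum contribution of a single added edge in $T_{n,r}$. Summing over the $b$ bad edges, subtracting the at most $c(n,F)(1+o(1))$ embeddings destroyed by each missing cross edge, and absorbing the multiply-counted copies (those containing two or more bad edges contribute only $O(b^2 n^{v(F)-4})$), the total number of copies of $F$ in $G$ is at least $(b-m)\cdot c(n,F)(1-o(1)) \geq q\cdot c(n,F)$ for $n$ sufficiently large.

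The main obstacle will be extracting the precise constant $c(n,F)$, not merely a quantity of the correct order. This demands a delicate embedding argument exploiting two properties simultaneously: a typical bad edge $uv$ has the full near-Tur\'an neighborhood at its disposal in each of the $r-1$ other parts, and a missing cross edge destroys at most as many $F$-copies (in the chosen color-critical position) as an added edge in $T_{n,r}$ creates. Two subsidiary difficulties require care: atypical bad edges whose endpoints have unusually large bad-degree, which are few and can be absorbed into the $o(1)$ slack provided the stability parameter $\varepsilon$ is chosen small in terms of $\delta$; and the slight imbalance of the parts, for which one must verify that $c(n,F)$, defined via the balanced graph $T_{n,r}$, still governs the count up to lower-order error for nearly balanced partitions.
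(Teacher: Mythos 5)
This statement is Mubayi's theorem, which the paper cites from \cite{Mubayi2010} but does not prove; there is no ``paper's own proof'' for me to compare your attempt against. What I can do is assess your sketch against what Mubayi actually does.

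Your outline correctly identifies the two-stage stability strategy that Mubayi uses: first apply supersaturation/removal to dispose of the case where $G$ is far from $T_{n,r}$ (where $\Omega(n^{v(F)})$ copies dwarf $q\cdot c(n,F)=O(n^{v(F)-1})$), then fix a partition minimizing $b=\sum_i e(V_i)$, derive the vertex-degree condition $d_{V_j}(v)\ge d_{V_i}(v)$, and count $F$-copies whose color-critical edge $e^*$ sits on a bad edge while the other vertices of $F$ follow a proper $r$-coloring of $F-e^*$. The identity $e(G)=\sum_{i<j}|V_i||V_j|+b-m$ combined with $\sum_{i<j}|V_i||V_j|\le e(T_{n,r})$ in fact yields $b-m\ge q$ exactly, not merely $q-O(1)$; you should keep this since the slack matters in the tight cases.

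The genuine gap is in the final accounting step. You claim the number of copies is at least $(b-m)\,c(n,F)(1-o(1))\ge q\,c(n,F)$, treating the $m$ missing cross-edges as each destroying ``at most $c(n,F)(1+o(1))$'' copies and the multiply-counted copies as an $O(b^2 n^{v(F)-4})$ error. Neither bound is robust in the regime where $b$ is only slightly larger than $q$: there $b(1-o(1))$ can fall \emph{below} $q$, and the $o(1)$ cannot be absorbed because there is no slack from $b-m>q$. Conversely, when $b=\Theta(\varepsilon n^2)$, the per-missing-edge loss $\sum_{\text{bad }uv} O(n^{v(F)-4}) = O(bn^{v(F)-4}) = O(\varepsilon n^{v(F)-2})$ is comparable to $c(n,F)$ itself, so the claim that each missing edge costs at most $c(n,F)(1+o(1))$ depends on the unspecified constants and on $\varepsilon$ being chosen after $F$ but before the $o(1)$'s are resolved. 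Mubayi's actual argument does not push a single global inclusion--exclusion through; it isolates ``typical'' bad edges whose endpoints have small bad-degree \emph{and} small missing cross-degree (your sketch mentions only the former), shows those contribute essentially disjoint $(1-o(1))c(n,F)$ families of copies, and handles the boundary case $b$ close to $q$ with a separate, sharper count. Your sketch correctly flags that extracting the exact constant is ``the main obstacle,'' but the resolution requires more than an $o(1)$-absorption: it requires controlling which bad edges and which missing edges can interact, and ensuring the typical-edge count already delivers the factor $q$ without relying on $b\gg q$.
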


\subsection{Counting triangles using the spectral radius}

Let $G$ be a simple graph on vertex set $\{v_1,\ldots ,v_n\}$.
Let $A(G)=[a_{ij}]_{i,j=1}^n$ be the adjacency matrix of $G$
 with the entry $a_{ij}=a_{ji}=1$ if
$v_i$ and $v_j$ are adjacent, and $a_{ij}=a_{ji}=0$ otherwise.
Let $\lambda (G)$ be the spectral radius of $G$,
which is defined as the maximum of
modulus of eigenvalues of  $A(G)$.
The classical extremal graph problems
 usually study the maximum or minimum
number of edges that the extremal graphs can have.
Correspondingly,
we can study the spectral Tur\'{a}n type problem,
which asks for the maximum value $\lambda (G)$ over all
$F$-free graphs $G$ with $n$ vertices.
In particular, Nikiforov  \cite{Niki2007laa2} published the spectral Tur\'{a}n theorem
by showing that
if  $G$ is a $K_{r+1}$-free graph on $n$ vertices, then
$ \lambda (G)\le \lambda (T_{n,r})$,
with equality if and only if $G =T_{n,r}$.
In the past few years, the spectral Tur\'{a}n theorem has stimulated the developments of the spectral extremal graph theory.
Various extensions and generalizations  have been obtained in the literature; see, e.g.,
\cite{Wil1986,Niki2002cpc,Niki2007laa2} for extensions
on $K_{r+1}$-free graphs;
see \cite{BN2007jctb} for relations
between cliques and spectral radius,
\cite{TT2017,LN2021outplanar} for outerplanar and planar graphs,
\cite{LNW2021,LP2022second,LFP2023} for non-bipartite triangle-free graphs,
\cite{LG2021,LSY2022} for non-bipartite graphs without short odd cycles,
\cite{CFTZ20, ZLX2022, 2022LLP} for intersecting triangles,
\cite{CDT2022a} for odd wheels,
\cite{CDT2022b} for even cycles,
\cite{CDT2023} for spectral Erd\H{o}s--S\'{o}s theorem,
\cite{ZLS2021} for $C_5$-free, $C_6$-free and $K_{2,r+1}$-free graphs with given size,
\cite{ZL2022jgt} for books and theta graphs,
\cite{Tait2019} for $K_r$-minor-free graphs,
\cite{ZL2022jctb} for $K_{s,t}$-minor-free graphs,
\cite{WKX2023} for a spectral extremal conjecture, 
\cite{C2006, Lu2012,XLGS2023} for eigenvalues and regularity, 
and \cite{NikifSurvey} for a comprehensive survey.

\medskip  
In 2009, Nikiforov \cite{Niki2009ejc} proved
 a spectral generalization of
the Simonovits theorem.
More precisely,
Nikiforov's result implies that if $F$ is a color-critical graph with $\chi (F)=r+1$ where $r\ge 2$,
then there exists an $n_0=n_0(F)$
such that if $n\ge n_0$ and $G$ is an $n$-vertex $F$-free graph, then
\begin{equation}  \label{eq4-Niki}
 \lambda (G)\le \lambda (T_{n,r}),
 \end{equation}
where the equality holds if and only if $G$  is the Tur\'{a}n graph $T_{n,r}$.
It is worth noting that
Nikiforov's result (\ref{eq4-Niki}) can imply Simonovits' result (\ref{eq-Sim}) except for the uniqueness;
see, e.g., \cite[Fact 1.1]{ZL2022jgt}.

\medskip

The study on counting substructures and eigenvalues was initiated by
Bollob\'{a}s and Nikiforov \cite{BN2007jctb} in 2007, who
provided a number of relations between the spectral radius
and the number of cliques for a graph $G$ with given order $n$ and size $m$.
In particular, the base case gives a spectral
supersaturation on triangles, which asserts that
if $G$ is a graph with the spectral radius $\lambda $, then
\begin{equation*} \label{eq-spectral-super-triangle}
  t(G) \ge \frac{\lambda \bigl(\lambda^2 - m\bigr)}{3}
  \end{equation*}
  and
  \begin{equation} \label{eqeq-BN2}
  t(G) > \frac{n^2}{12}\left( \lambda - \frac{n}{2}\right). 
  \end{equation}
Additionally, the first inequality
was  independently proved by Cioab\u{a},  Feng,
Tait and Zhang \cite{CFTZ20},
and it also studied by Ning and Zhai \cite{NZ2021}.
These alternative proofs are quite different.
Moreover,  Ning and Zhai \cite{NZ2021} proved  a spectral version of (\ref{eq-Erdos-1})
by showing that
if $ \lambda (G) \ge \lambda (T_{n,2}) $,
then
\begin{equation} \label{thmNZ2021}
t(G) \ge \left\lfloor \frac{n}{2} \right\rfloor -1 ,
\end{equation}
unless $G=T_{n,2}$.
Furthermore,
an even more extensive problem, first explicitly posed by
Ning and Zhai \cite{NZ2021}, is to study the spectral supersaturation
behind Mubayi's theorem.

\begin{problem}[Ning--Zhai \cite{NZ2021}, 2023]   \label{prob-NZ} 
\quad \\ 
(i) (The general case)
Find a spectral version of Mubayi's result.  \\
(ii) (The critical case) For $q=1$, where $q$ is defined as
in Theorem \ref{thm-Mubayi}, find the tight spectral versions when $F$ is some particular color-critical graph, such as triangle, clique, book, odd cycle
or even wheel, etc.
\end{problem}

In this paper, we focus our attention on Problem \ref{prob-NZ}
and intend to establish a spectral version in
the general case for color-critical graphs.
Unlike the size $e(G)$ of a graph $G$,
it seems not obvious to
measure the increment on the spectral radius $\lambda (G)$,
since $e(G)$ is always an integer, while
$\lambda (G)$ is not.
As commented by Ning and Zhai on the footnote
in \cite{NZ2021},
finding a spectral version of Mubayi's result,
one had better to give an estimate or a formula of
spectral correspondence of $c(n,F)$.
Our contribution in this paper is twofold.
Firstly, we shall give a new way to measure the increment on the spectral radius
and present a spectral conjecture of Theorem \ref{thm-Mubayi}.
Secondly, we will confirm the triangle case by showing a spectral version of Theorem \ref{thmrad}.

\section{Main results}

\subsection{A spectral conjecture on Mubayi's result}

In what follows, we shall establish a
spectral condition  corresponding to
the edge condition $e(G)=e(T_{n,r}) +q$. 
 We propose the following counting problem,
which provides a spectral version on Mubayi's result.
Recall that $c(n,F)$ is the minimum number of copies of $F$ in the graph obtained from
the $r$-partite Tur\'{a}n graph
$T_{n,r}$ by adding exactly one edge.
In particular, setting $F=C_3$ or $C_5$, then $r=2$, we have 
$c(n,C_3)= \lfloor \frac{n}{2}\rfloor$ and $c(n,C_5)=
\lfloor \frac{n}{2}\rfloor (\lfloor \frac{n}{2}\rfloor -1) 
(\lceil \frac{n}{2}\rceil -2)$.
Let $T_{n,r,q}$
be the $n$-vertex graph obtained from Tur\'{a}n's graph
$T_{n,r}$
by embedding a star $K_{1,q}$ with
$q$ edges into a vertex part of size $\lceil \frac{n}{r} \rceil$.
We propose the following conjecture.

\begin{conjecture}  \label{conj-main}
Let  $F$ be a color-critical graph
with $\chi (F)=r+1 \ge 3$. There exists $\delta =\delta (F)>0$ such that
if $n$ is sufficiently large, $1\le q < \delta n$, and
$G$ is an $n$-vertex graph with
\[  \lambda (G)\ge \lambda (T_{n,r,q}),\]
then $G$ contains at least $q\cdot c(n,F) $ copies of $F$.
Furthermore, the graph $T_{n,r,q}$ is the unique spectral extremal graph attaining the minimum number of copies of $F$.
\end{conjecture}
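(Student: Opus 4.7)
The plan is to reduce Conjecture \ref{conj-main} to Mubayi's theorem (Theorem \ref{thm-Mubayi}) via a spectral-to-combinatorial stability argument, with a sharp estimate of the benchmark $\lambda(T_{n,r,q})$ as the engine.

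First I would pin down $\lambda(T_{n,r,q})$ up to lower-order terms. Writing $T_{n,r,q}$ as $T_{n,r}$ plus a star $K_{1,q}$ embedded inside a part of size $\lceil n/r\rceil$, and using that the Perron eigenvector of $T_{n,r}$ is essentially constant on each part with all entries of order $n^{-1/2}$, the Rayleigh quotient immediately yields
\[
\lambda(T_{n,r,q}) \ge \lambda(T_{n,r}) + \frac{(2+o(1))\,q}{n},
\]
and a matching upper bound follows from a first-order perturbation of the Perron eigenvector. This makes the spectral increment explicit and linear in $q/n$.

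Second, suppose $\lambda(G) \ge \lambda(T_{n,r,q})$. Since $q<\delta n$ we have $\lambda(G) = \lambda(T_{n,r})(1+o(1))$, so Nikiforov's spectral stability theorem (the source of (\ref{eq4-Niki})) produces a partition $V(G)=V_1\cup\cdots\cup V_r$ with $\bigl||V_i|-n/r\bigr|=o(n)$, outside of which only $o(n^2)$ edges are ``bad'' (missing between parts or present inside parts). If the number of bad edges is of order $n^2$ then $G$ already contains $\Omega(n^{|V(F)|})$ copies of $F$, vastly more than $q\cdot c(n,F)$, so the only delicate case is when $G$ is very close to $T_{n,r}$ in edit distance. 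The crucial third step is to upgrade the spectral hypothesis to the edge condition $e(G)\ge e(T_{n,r})+q$. The lemma to prove is: among all $n$-vertex graphs in the stability ball around $T_{n,r}$ with exactly $e(T_{n,r})+q$ extra edges, the spectral radius is uniquely maximised by $T_{n,r,q}$. Granted this, $\lambda(G)\ge\lambda(T_{n,r,q})$ forces $e(G)\ge e(T_{n,r})+q$, and Mubayi's theorem immediately yields at least $q\cdot c(n,F)$ copies of $F$. For the uniqueness clause, equality in the spectral-maximality lemma combined with the uniqueness in Mubayi's theorem pins $G$ down to $T_{n,r,q}$.

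The hardest part is precisely this third step. Showing that embedding a star $K_{1,q}$ inside a single part is the spectral-maximal configuration for $q$ additional edges requires fine control on the Perron eigenvector of $G$ within the stability regime, together with a local switching and edge-redistribution argument proving that moving an extra edge across parts, or peeling it off the heaviest vertex, strictly decreases $\lambda$. This eigenvector-control step is the true engine behind spectral counting results of this flavour (e.g.\ the triangle case (\ref{thmNZ2021}) of Ning--Zhai, and the present paper's confirmation for $F=K_3$), and the main challenge of the conjecture is to implement it uniformly for every color-critical $F$.
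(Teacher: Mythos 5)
The statement you are proving is a \emph{conjecture} in the paper; the authors do not prove it (they confirm only the base case $q=1$, $F=K_3$, and they do so by a direct Rayleigh-quotient analysis of a minimum counterexample, not by any reduction to an edge-count theorem). So there is no ``paper's proof'' for your argument to match; the relevant question is whether your outline actually closes the conjecture, and it does not.

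Your plan is a reduction: spectral hypothesis $\Rightarrow$ (stability) $\Rightarrow$ edge hypothesis $\Rightarrow$ Mubayi. The removal-lemma/Erd\H{o}s--Frankl--R\"odl/Nikiforov-stability steps are standard and essentially appear in the paper's concluding remarks, so the dichotomy ``many copies of $F$, or $G$ is $o(n^2)$-close to $T_{n,r}$'' is fine. The entire load then falls on your ``crucial third step,'' and there you merely \emph{state} the lemma you would need, acknowledge that it is ``the hardest part,'' and gesture at ``local switching and eigenvector control.'' That lemma is precisely the content of the conjecture in the stability regime; asserting it is not proving it, so the argument is circular in substance.

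Worse, the lemma as you have phrased it (``among stability-ball graphs with exactly $e(T_{n,r})+q$ edges, $\lambda$ is uniquely maximized by $T_{n,r,q}$'') is actually false at the endpoint you would need to chain it down to. The paper itself exhibits, in Section~2.2, graphs at edit distance $O(1)$ from $T_{n,2}$ with exactly $e(T_{n,2})$ edges and $\lambda>\lambda(T_{n,2})$: $K_{\frac{n}{2},\frac{n}{2}}^{+|}$ and $K_{\frac{n}{2}+1,\frac{n}{2}-1}^{+}$. So for $q'=0$ the spectral maximizer at fixed size in the stability ball is not $T_{n,r,q'}$. The implication you actually need is the weaker monotone statement that every stability-ball graph with at most $e(T_{n,r})+q-1$ edges has $\lambda<\lambda(T_{n,r,q})$; that may well be true, but it is a genuinely delicate comparison (those examples sit strictly between $\lambda(T_{n,2})$ and $\lambda(T_{n,2,1})$, and you would have to rule out anything creeping above $\lambda(T_{n,r,q})$), and you give no argument for it. Until that comparison is carried out --- with explicit control of the Perron eigenvector on near-Tur\'an graphs, for \emph{all} edge defects up to $q$ and \emph{all} color-critical $F$ --- the proposal is a restatement of the problem, not a proof. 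Note also that the paper's proof of the confirmed case (Theorem~\ref{thm-LLP}) deliberately avoids this reduction: it never passes through $e(G)\ge\lfloor n^2/4\rfloor+1$, instead working directly with the identity $\lambda^2 x_{u^*}=|A|x_{u^*}+\sum_{u\in A}d_A(u)x_u+\sum_{w\in B}d_A(w)x_w$ and a long structural case analysis, precisely because converting the spectral hypothesis into a sharp edge hypothesis is the hard step.
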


Here, we would like to
give an explanation of the original ideas of proposing such a conjecture.  Recently,
Pikhurko and Yilma \cite{PY2017} extended Mubayi's result and proved that
there exist $\delta =\delta (F)>0$
and $n_0$ such that
for all $n\ge n_0$  and $1\le q < \delta n$,
if $G$ is an $n$-vertex graph
with $e(T_{n,r}) +q$ edges which contains the smallest
number of copies of
$F$, then $G$ is obtained from $T_{n,r}$ by adding $q$ edges.
More precisely, the extremal graphs with minimum copies of $F$ are obtained from $T_{n,r}$ by embedding
a small subgraph with $q$ edges into a larger vertex part.
Upon computation, we find that
the resulting graph attains the maximum spectral radius
when we choose the star $K_{1,q}$  as the embedded subgraph.

\medskip
Benefited from the  techniques of Ning and Zhai \cite{NZ2021},
we are able to confirm  Conjecture \ref{conj-main}
 in the base case $q=1$ by counting the copies of triangles
 for all integers $n$,
not only for sufficiently large $n$.
 Moreover, we show further that the spectral extremal graph is unique.
 Our result can be viewed as a spectral version of Erd\H{o}s--Rademacher's result in Theorem \ref{thmrad}.
In the sequel, we denote by $K_{\lceil \frac{n}{2} \rceil, \lfloor \frac{n}{2} \rfloor}^+$ the graph obtained from the balanced complete bipartite graph
 $K_{\lceil \frac{n}{2} \rceil, \lfloor \frac{n}{2} \rfloor}$ by
adding exactly one edge into the
part of larger size; see Figure \ref{fig-Erdos-Rade}.

 \begin{figure}[H]
\centering
\includegraphics[scale=0.85]{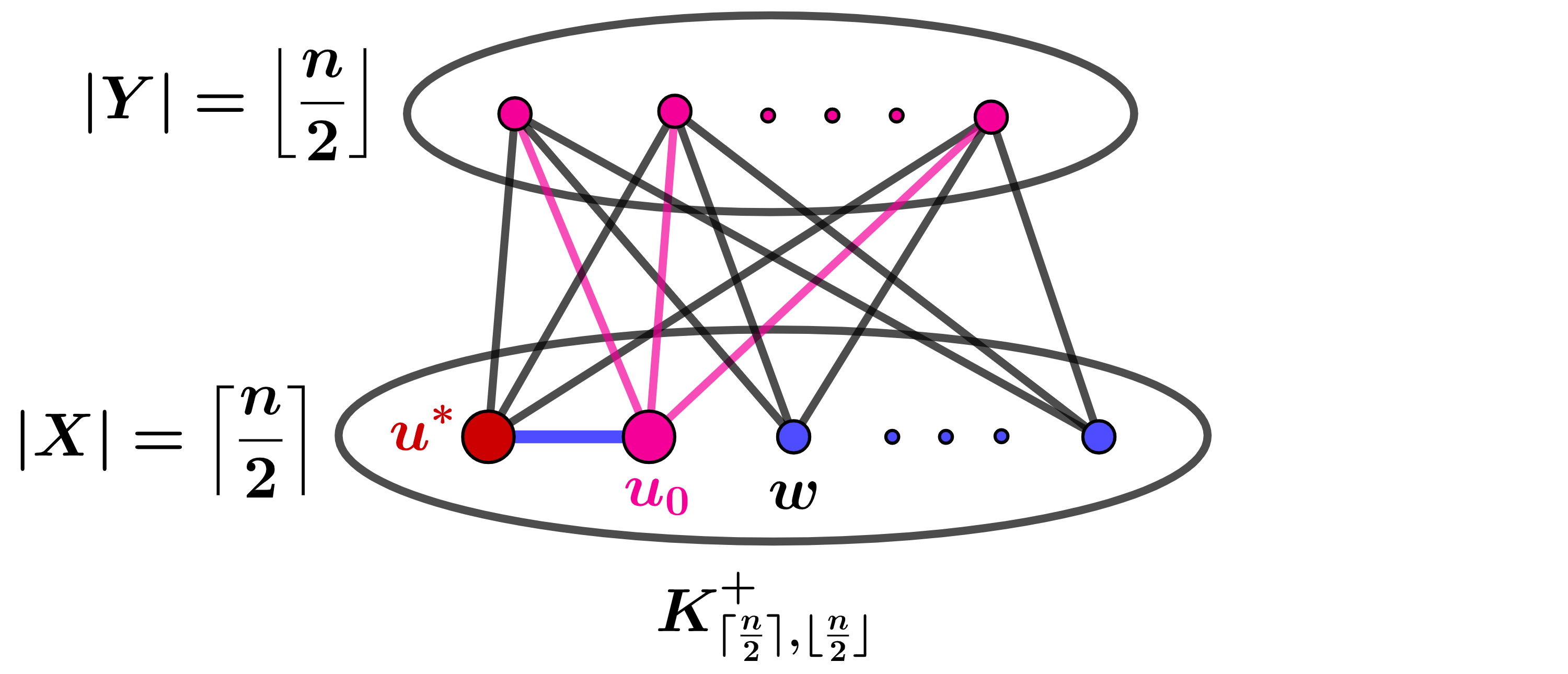}
\caption{The unique extremal graph in Theorem \ref{thm-LLP}.}
\label{fig-Erdos-Rade}
\end{figure}

\begin{theorem} \label{thm-LLP}
If $G$ is a graph on $n$ vertices with
\[  \lambda (G) \ge \lambda (K_{\lceil \frac{n}{2} \rceil, \lfloor \frac{n}{2} \rfloor}^+) ,\]
then $ t(G) \ge \lfloor \frac{n}{2}\rfloor$,
with equality  if and only if
$G=K_{\lceil \frac{n}{2} \rceil, \lfloor \frac{n}{2} \rfloor}^+$.
\end{theorem}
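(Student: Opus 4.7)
The plan is to combine the Bollob\'{a}s--Nikiforov triangle inequality $t(G)\ge\lambda(\lambda^2-e(G))/3$ with the classical Erd\H{o}s--Rademacher theorem (Theorem~\ref{thmrad}) and Erd\H{o}s's refinement~(\ref{eq-Erdos-1}), using a sharp estimate for $\lambda_0:=\lambda(K_{\lceil n/2\rceil,\lfloor n/2\rfloor}^+)$ to bridge the two kinds of bounds. Throughout write $a=\lceil n/2\rceil$ and $b=\lfloor n/2\rfloor$.

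First I would pin down $\lambda_0$ via the natural equitable partition of $K_{a,b}^+$ into three cells: the two endpoints of the extra edge, the remainder of the larger class, and the smaller class. The resulting $3\times 3$ quotient matrix yields the characteristic equation $\lambda_0^3-\lambda_0^2-ab\,\lambda_0+b(a-2)=0$, equivalently the useful identity $\lambda_0^2=ab+\frac{2b}{\lambda_0-1}$. An elementary upper bound $\lambda_0<\sqrt{ab}+1$ (which itself follows by plugging back into the identity) then forces $\frac{2b}{\lambda_0-1}>2$, so $\lambda_0^2>\lfloor n^2/4\rfloor+2$; in particular $\lambda_0>\sqrt{ab}=\lambda(T_{n,2})$.

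Arguing by contradiction, assume $\lambda(G)\ge\lambda_0$ while $t(G)\le b-1$. Rearranging the Bollob\'{a}s--Nikiforov inequality gives $e(G)\ge\lambda(G)^2-\frac{3t(G)}{\lambda(G)}\ge\lambda_0^2-\frac{3(b-1)}{\lambda_0}$; combining the estimate on $\lambda_0^2$ with the trivial bound $\frac{3(b-1)}{\lambda_0}<3$ yields $e(G)>\lfloor n^2/4\rfloor-1$, so $e(G)\ge\lfloor n^2/4\rfloor$. If $e(G)\ge\lfloor n^2/4\rfloor+1$, Theorem~\ref{thmrad} immediately supplies $t(G)\ge b$, a contradiction. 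Hence $e(G)=\lfloor n^2/4\rfloor$ exactly; since $\lambda(G)>\lambda(T_{n,2})$ rules out $G=T_{n,2}$, Erd\H{o}s's inequality~(\ref{eq-Erdos-1}) forces $t(G)\ge b-1$, and so $t(G)=b-1$.

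The main obstacle will be this residual configuration: $e(G)=\lfloor n^2/4\rfloor$, $t(G)=b-1$, $G\ne T_{n,2}$, and yet $\lambda(G)\ge\lambda_0$. A stability analysis for~(\ref{eq-Erdos-1}) confines $G$ to a short list of near-$T_{n,2}$ structures, essentially the ``swap graph'' $T_{n,2}-uv+uw$ (delete one cross-edge and insert one intra-class edge) and the unbalanced copy $K_{a+1,b-1}^+$, together with their isomorphic variants. For a swap graph, relabelling $uw$ so that it coincides with the extra edge of $K_{a,b}^+$ exhibits $G$ as a proper subgraph $K_{a,b}^+-uv$, and edge monotonicity of the spectral radius gives $\lambda(G)<\lambda_0$. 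For $K_{a+1,b-1}^+$ the identity $\lambda(K_{a,b}^+)^2=ab+\frac{2b}{\lambda-1}$ shows that $\lambda(K_{a,b}^+)$ strictly decreases as the bipartition becomes more unbalanced at fixed $n$, again giving $\lambda(G)<\lambda_0$. Any remaining exotic candidates (multi-swaps and the like) fall to the same comparison mechanism. The uniqueness clause then follows because every comparison above is strict: equality $\lambda(G)=\lambda_0$ combined with $t(G)=b$ excludes all sub-extremal structures and forces $G=K_{\lceil n/2\rceil,\lfloor n/2\rfloor}^+$.
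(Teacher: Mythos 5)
Your reduction via the Bollob\'as--Nikiforov inequality is clean and the quotient-matrix computation for $\lambda_0$ matches Lemma~\ref{lem-21} of the paper, but the argument collapses at the residual configuration. Having forced $e(G)=\lfloor n^2/4\rfloor$ and $t(G)=\lfloor n/2\rfloor-1$, you appeal to ``a stability analysis for~(\ref{eq-Erdos-1})'' that supposedly confines $G$ to the single swap graph and $K_{a+1,b-1}^+$. No such classification is stated in the paper, is not part of Erd\H{o}s's 1964 theorem, and is not, as far as I know, in the literature in the strong form you need: a complete list of $n$-vertex graphs with exactly $\lfloor n^2/4\rfloor$ edges and exactly $\lfloor n/2\rfloor-1$ triangles. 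The sentence ``Any remaining exotic candidates (multi-swaps and the like) fall to the same comparison mechanism'' is exactly the unproved content --- you have neither enumerated these graphs nor given a mechanism that bounds their spectral radii uniformly below $\lambda_0$. (Note also that your derivation of $\lambda_0^2>\lfloor n^2/4\rfloor+2$ from the identity $\lambda_0^2=ab+\tfrac{2b}{\lambda_0-1}$ only gives $\lambda_0^2>\lfloor n^2/4\rfloor+2\sqrt{b/a}$ when $n$ is odd, since $b<a$; this is fixable via the paper's Lemma~\ref{lem-21}, but it is a sign that the loose edge-count filtering leaves little room.)

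The uniqueness clause is also not actually established: your whole chain began by assuming $t(G)\le\lfloor n/2\rfloor-1$, so it says nothing about which graphs with $t(G)=\lfloor n/2\rfloor$ attain $\lambda(G)\ge\lambda_0$. By contrast, the paper never touches Bollob\'as--Nikiforov or Erd\H{o}s--Rademacher in the proof; it fixes a maximizer of $\lambda$ subject to $t(G)\le\lfloor n/2\rfloor$, takes the Perron eigenvector, sets $A=N(u^*)$, $B=V\setminus(A\cup\{u^*\})$, and runs the identity $\lambda^2 x_{u^*}=|A|x_{u^*}+\sum_{u\in A}d_A(u)x_u+\sum_{w\in B}d_A(w)x_w$ through a sequence of structural claims (all triangles through $u^*$, $G[A]$ is a star plus isolated vertices, $B$ independent, etc.) culminating in $|A|=t+1$. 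That Perron-vector machinery is precisely what replaces the stability classification you are implicitly assuming; without supplying something equivalent, your proof has a genuine gap.
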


This paper is organized as follows.
In Section \ref{sec2}, we will provide some preliminaries
and lemmas for our purpose, and list the sketch of the proof
of Theorem \ref{thm-LLP}.
In Section \ref{sec3}, we shall put our attention to the proof of
Theorem \ref{thm-LLP}.
The  techniques of our proof of Theorem \ref{thm-LLP}
are motivated by  that in \cite{NZ2021}.
To some extent, the results in this paper
could be regarded as a  complement of the works in \cite{NZ2021,NZ2021b}.
In our setting,
we need to make more structural analysis of $G$ since the uniqueness is determined.
Indispensably, some different treatments and techniques are needed in our proof.
Finally, we give some concluding remarks and open problems in Section \ref{sec5}.

\subsection{Remarks on  Theorem \ref{thm-LLP}}

Under the condition of Theorem \ref{thm-LLP},
Bollob\'{a}s--Nikiforov's result (\ref{eqeq-BN2}) can also gives
a bound on $t(G)$. Indeed, using a forthcoming Lemma \ref{lem-21},
one can obtain that if $n$ is even, then $\lambda (G) > \frac{n}{2} + \frac{2}{n}$.
Applying (\ref{eqeq-BN2}), we obtain $t(G) > \frac{n}{6}$;
if $n$ is odd, then $\lambda (G) > \frac{n}{2} + \frac{1.75}{n}$
 and $t(G) >0.1458n$.
 On the other hand, using Lemma \ref{lem-21} again,
 we can get $\lambda (K_{\lceil \frac{n}{2} \rceil, \lfloor \frac{n}{2} \rfloor}^+) <  \frac{n}{2} + \frac{2.1}{n}$ for large $n$.
 Then the lower bound on $t(G)$ guaranteed by
  (\ref{eqeq-BN2})  is no more than $0.175n$.
Thus, our result in Theorem \ref{thm-LLP} provides a stronger bound
 $t(G) \ge 0.5n$ in this sense.
More generally, for a graph $G$ satisfying $\lambda (G) > \lambda (T_{n,r})$ and $\lambda (G)$
is very close to $\lambda (T_{n,r})$,
 Conjecture \ref{conj-main}
also provides a better bound on the number of cliques $K_{r+1}$ than
that of Bollob\'{a}s and Nikiforov  \cite{BN2007jctb}. 
Although Theorem \ref{thm-LLP} provides a corresponding spectral version of
Theorem \ref{thmrad},
it is not a generalization on spectral Mantel's theorem.
Moreover,  Theorem \ref{thm-LLP}
and the bound in (\ref{thmNZ2021}) are not comparable
since both the condition and result of
Theorem \ref{thm-LLP} are slightly stricter.
Furthermore,
the spectral extremal graph of Theorem \ref{thm-LLP} is determined uniquely,
while the equality case in (\ref{thmNZ2021})  is not.  

\medskip
Last but not least, Erd\H{o}s--Rademacher's theorem requires 
$e(G) > e(T_{n,2})$. Intuitively, 
one may ask in Theorem \ref{thm-LLP}  that
if $G$  satisfies  a weaker condition, namely,
$ \lambda (G)> \lambda (T_{n,2})$,
 whether $G$ contains at least  $\lfloor \frac{n}{2}\rfloor$ triangles.
 Unfortunately, this result does not hold.
 Indeed,
when $n$ is even, we denote by $K_{\frac{n}{2}+1,\frac{n}{2}-1}^+$
 the graph obtained from the bipartite graph $K_{\frac{n}{2}+1,\frac{n}{2}-1}$ by adding an edge to
the vertex part of size $\frac{n}{2}+1$.
Upon computation, it follows that $\lambda (K_{\frac{n}{2}+1,\frac{n}{2}-1}^+) $ is
the largest root of the polynomial
\[ f_1(x)= x^3 - x^2 + (1- \tfrac{n^2}{4})x + \tfrac{n^2}{4} - n + 1 .\]
Since $f_1(\frac{n}{2})= 1-\frac{n}{2} <0$ for $n\ge 4$,
we have 
\[  \lambda (K_{\frac{n}{2}+1,\frac{n}{2}-1}^+) > \lambda (T_{n,2})=\frac{n}{2}.\] 
However,  the number of triangles in $K_{\frac{n}{2}+1,\frac{n}{2}-1}^+$ is exactly $\frac{n}{2}-1$.
This example demonstrates that
$t(G)\ge \lfloor \frac{n}{2}\rfloor$ can  not be guaranteed
in a graph $G$ with  $\lambda (G) > \lambda (T_{n,2})$.

\medskip
Upon calculations, one can check 
the following graphs that satisfy
 $\lambda (G) > \lambda (T_{n,2})$ and
 $t(G)= \left\lfloor \frac{n}{2}\right\rfloor -1$.
For simplicity, we provide these graphs without details.

\medskip
(a)  For even integer $n$,
as we have discussed above,
the graph $G=K_{\frac{n}{2}+1, \frac{n}{2}-1}^+$ is an
extremal graph with $t(G)= \frac{n}{2}  -1$.
In addition, there is one more possibility for even $n$. We denote
by $G=K_{\frac{n}{2} ,  \frac{n}{2} }^{+|}$  the graph obtained from
$K_{ \frac{n}{2},  \frac{n}{2}}$ by adding an edge $e_1$ into the part of size $ \frac{n}{2} $ and deleting an edge $e_2$ between two parts such that $e_2$ is incident to $e_1$.
By calculation, we obtain that $\lambda (K_{\frac{n}{2} ,  \frac{n}{2} }^{+|} )$
is the largest root of
\[  f_2(x)=
x^4 - \tfrac{n^2}{4} x^2 - (n-2)x +1+ \tfrac{n^2}{2} -2n. \]
One can check that $f_2(\frac{n}{2})=1-n<0$ and hence $\lambda (K_{\frac{n}{2} ,  \frac{n}{2} }^{+|} )>\frac{n}{2} =
\lambda (T_{n,2})$.

(b) For odd integer $n$, we take
$G=K_{\frac{n+1}{2} , \frac{n-1}{2}}^{+|}$,  which is the graph obtained from
$K_{ \frac{n+1}{2},  \frac{n-1}{2}}$ by adding an edge $e_1$ into the part of size $ \frac{n+1}{2} $ and deleting an edge $e_2$ between two parts such that $e_2$ is incident to $e_1$.
By a direct calculation, we know that $\lambda
(K_{ \frac{n+1}{2}, \frac{n-1}{2}}^{+|} )$
is the largest root of
\[  f_3(x)=x^4 -\tfrac{n^2-1}{4}x^2 -
(n-3)x + \tfrac{n^2}{2} - 2n+\tfrac{3}{2}. \]
 Moreover, one can verify that
$f_3(\frac{\sqrt{n^2-1}}{2})=\frac{n-3}{2}(n-1-\sqrt{n^2-1})<0$,
which implies $\lambda (K_{ \frac{n+1}{2},\frac{n-1}{2}}^{+|})
> \frac{\sqrt{n^2-1}}{2} = \lambda (T_{n,2})$.
It is easy to observe that $t(K_{ \frac{n+1}{2},\frac{n-1}{2}}^{+|}) = \frac{n-1}{2} -1$.

\section{Lemmas and outline of the proof}

\label{sec2}

Before stating  the detailed proof of Theorem \ref{thm-LLP},
we need to present a lemma, which provides 
a characterization of the spectral radius of the graph
$K_{ \lceil \frac{n}{2} \rceil, \lfloor \frac{n}{2} \rfloor}^+$.

\begin{lemma} \label{lem-21}
(a) If $n$ is even, then $\lambda(K_{\frac{n}{2}, \frac{n}{2}}^+)$
is the largest root of
\[  f(x)=x^3-x^2 - \frac{n^2}{4}x + \frac{n^2}{4}-n. \]
(b) If $n$ is odd, then $\lambda(K_{\frac{n+1}{2}, \frac{n-1}{2}}^+)$
is the largest root of
\[ g(x)=x^3 - x^2 + \frac{1-n^2}{4} x +
\frac{n^2}{4} - n + \frac{3}{4}. \]
Consequently, we have
\begin{equation} \label{eq-n2+2}
  \lambda^2 (K_{ \lceil \frac{n}{2} \rceil, \lfloor \frac{n}{2} \rfloor}^+) >\left\lfloor \frac{n^2}{4} \right\rfloor +2.
  \end{equation}
\end{lemma}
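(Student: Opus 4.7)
The plan is to exploit the automorphism group of $K^+_{\lceil n/2 \rceil, \lfloor n/2 \rfloor}$ to reduce the Perron eigenvalue equation to a $3 \times 3$ quotient system. Write $A$, $B$ for its two parts with $|A| = \lceil n/2 \rceil$ and $|B| = \lfloor n/2 \rfloor$, and let $u, v \in A$ be the endpoints of the added edge. The partition $V = B \cup (A \setminus \{u,v\}) \cup \{u,v\}$ is equitable, so the Perron eigenvector is constant on each class; call the common values $b$, $a_1$, $a_2$, respectively. Writing the eigenvalue equation at a vertex of each class gives
\begin{equation*}
\lambda b = (\lceil n/2 \rceil - 2)\, a_1 + 2 a_2, \qquad
\lambda a_1 = \lfloor n/2 \rfloor\, b, \qquad
(\lambda - 1)\, a_2 = \lfloor n/2 \rfloor\, b,
\end{equation*}
where the last equation accounts for the edge $uv$ lying inside $A$. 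Solving $a_1 = \lfloor n/2 \rfloor b / \lambda$ and $a_2 = \lfloor n/2 \rfloor b / (\lambda - 1)$ and substituting into the first equation yields a single cubic in $\lambda$ after clearing denominators.

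For part (a), substituting $\lceil n/2 \rceil = \lfloor n/2 \rfloor = n/2$ and expanding reproduces $f(\lambda) = 0$. For part (b), using $\lfloor n/2 \rfloor = (n-1)/2$ and $\lceil n/2 \rceil - 2 = (n-3)/2$ and carrying out the same computation reproduces $g(\lambda) = 0$. In both cases $\lambda$ is the largest real root of the cubic, since it is a Perron eigenvalue and the cubics have positive leading coefficient, so they are eventually increasing past their largest root.

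For the inequality \eqref{eq-n2+2}, I set $x_0 := \sqrt{\lfloor n^2/4 \rfloor + 2}$ and evaluate the relevant cubic at $x_0$, exploiting the fact that $x_0^2 - \lfloor n^2/4 \rfloor = 2$ cancels the cubic and quadratic terms cleanly against the coefficient of $x$. For even $n$, one obtains $f(x_0) = 2 x_0 - (n+2)$, and the inequality $f(x_0) < 0$ reduces to $4 x_0^2 = n^2 + 8 < (n+2)^2$, i.e., $n > 1$. For odd $n$, the analogous computation gives $g(x_0) = 2 x_0 - (n+1)$, which is negative exactly when $4 x_0^2 = n^2 + 7 < (n+1)^2$, i.e., $n > 3$. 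In either case $f(x_0) < 0$ (resp. $g(x_0) < 0$) forces $\lambda > x_0$ by the monotonicity of the cubic past its largest root, giving $\lambda^2 > \lfloor n^2/4 \rfloor + 2$.

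The computations are mostly routine algebra; the two conceptual ingredients are the equitable-partition reduction and the evaluation trick that exploits the near-cancellation of the coefficient of $x$ with $\lfloor n^2/4\rfloor$. The only subtlety is that the strict inequality in the odd case fails at $n = 3$, where $K^+_{2,1} = K_3$ gives equality; this is harmless since Theorem \ref{thm-LLP} is non-trivial only for $n \ge 4$.
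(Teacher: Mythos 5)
Your proof is correct and follows essentially the same route as the paper: both use the same three-class equitable partition of $K^+_{\lceil n/2 \rceil, \lfloor n/2 \rfloor}$ (the paper writes out the $3\times 3$ quotient matrix and takes its characteristic polynomial, while you eliminate $a_1, a_2$ directly, which produces the same cubic), and both establish the inequality by evaluating the cubic at $x_0 = \sqrt{\lfloor n^2/4\rfloor + 2}$ and using the cancellation $x_0^2 - \lfloor n^2/4 \rfloor = 2$. Your observation that the strict inequality fails at $n=3$ (where $K^+_{2,1} = K_3$ gives $\lambda^2 = 4 = \lfloor 9/4\rfloor + 2$) is a small oversight in the paper's statement, though harmless since Theorem~\ref{thm-LLP} is vacuous or trivial for $n \le 3$.
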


\begin{proof}
(a) Let $X=(x_1,x_2,\ldots ,x_n)^T$ be a Perron eigenvector corresponding to $\lambda(K_{\frac{n}{2}, \frac{n}{2}}^+)$.
We partition the vertex set of $ K_{\frac{n}{2}, \frac{n}{2}}^+$ as $\Pi$:
\[   V(K_{\frac{n}{2}, \frac{n}{2}}^+ )=
X_1 \cup X_2 \cup Y, \]
 where $X_1=\{u^*,u_0\}$ forms an edge,
$X_1\cup X_2$ and $Y$ are partite sets of $K_{\frac{n}{2}, \frac{n}{2}}$.
By comparing the neighborhoods, we can see that $x_{u^*}=x_{u_0}$,
all coordinates of the vector $X$
corresponding to vertices of $X_2$  are equal
(the coordinates of vertices of $Y$ are equal).
Without loss of generality, we may assume that
$x_{u^*}=x_{u_0}=x$, $x_w=y$ for each $w\in X_2$, and $x_v=z$
for each $v\in Y$. Then
\[  \begin{cases}
\lambda x= x + \frac{n}{2} z, \\
\lambda y = \frac{n}{2}z, \\
\lambda z = 2x + (\frac{n}{2}-2)y.
\end{cases} \]
Thus $\lambda(K_{\frac{n}{2}, \frac{n}{2}}^+)$ is the largest eigenvalue of
\[  B_{\Pi} =  \begin{bmatrix}
1 & 0 & \frac{n}{2} \\
0 & 0 & \frac{n}{2} \\
2 & \frac{n}{2}-2 & 0
\end{bmatrix}.  \]
By calculation, we know that $\lambda(K_{\frac{n}{2}, \frac{n}{2}}^+)$ is the largest  root of
\[ f(x)= \det (xI_3 - B_{\Pi})=x^3-x^2 - \frac{n^2}{4}x + \frac{n^2}{4}-n.  \]
(b) For odd $n$, the proof is similar with the previous case.
We partition the vertex set of $K_{\frac{n+1}{2}, \frac{n-1}{2}}^+$
as $\Pi'$:
\[  V(K_{\frac{n+1}{2}, \frac{n-1}{2}}^+) = X_1\cup X_2\cup Y, \]
where $X_1=\{u^*,u_0\}$ forms an edge, $X_1\cup X_2$ and $Y$
are partite sets of $K_{\frac{n+1}{2}, \frac{n-1}{2}}$
satisfying $|X_1| + |X_2|= \frac{n+1}{2}$
and $|Y|=\frac{n-1}{2}$. Then
a similar argument yields that
$\lambda (K_{\frac{n+1}{2}, \frac{n-1}{2}}^+)$ is the largest eigenvalue of
\[  B_{\Pi'}= \begin{bmatrix}
1 & 0 & \frac{n-1}{2} \\
0 & 0 & \frac{n-1}{2} \\
2 & \frac{n+1}{2}-2 & 0
\end{bmatrix}.  \]
Thus, $\lambda (K_{\frac{n+1}{2}, \frac{n-1}{2}}^+)$
is the largest root of
\[  g(x)= \det (xI_3- B_{\Pi'})
=x^3 - x^2 + \frac{1-n^2}{4} x +
\frac{n^2}{4} - n + \frac{3}{4}.  \]
Finally, we are ready to prove (\ref{eq-n2+2}).
By calculation, we can verify that for even $n$,
\[  f \left(\sqrt{{n^2}/{4 }+2} \right) = \sqrt{n^2+8} -n -2<0 \]
and for every odd $n$,
\[  g \left(\sqrt{{(n^2-1)}/{4} +2} \right) = \sqrt{n^2+7} -n-1 <0. \]
This completes the proof.
\end{proof}

Rayleigh's formula also gives a lower bound
on the spectral radius.
\begin{equation} \label{eq-3}
\lambda (K_{ \lceil \frac{n}{2} \rceil,  \lfloor \frac{n}{2} \rfloor}^+)
> \frac{2e(K_{ \lceil \frac{n}{2} \rceil,  \lfloor \frac{n}{2} \rfloor}^+)}{n}
= \begin{cases}
\frac{n}{2} + \frac{2}{n} & \text{if $n$ is even;}\\
\frac{n}{2} + \frac{3}{2n} & \text{if $n$ is odd.}
\end{cases}
\end{equation}
In particular, if $n$ is even, then we get from (\ref{eq-3})
that $\lambda^2 > (\frac{n}{2} + \frac{2}{n})^2 >
\frac{n^2}{4} +2$.
Thus, this observation also provides an alternative proof of (\ref{eq-n2+2}) for even $n$.

\medskip
The following lemma \cite{WXH2005} is also needed in our proof,
it provides an operation of a connected graph which increases the adjacency spectral radius strictly.

\begin{lemma}[Wu--Xiao--Hong \cite{WXH2005}, 2005] \label{lem-22}
Let $G$ be a connected graph
and $(x_1,\ldots ,x_n)^T$ be a Perron vector of $G$,
where $x_i$ corresponds to $v_i$.
Assume that
 $v_i,v_j \in V(G)$ are vertices such that $x_i \ge x_j$, and $S\subseteq N_G(v_j) \setminus N_G(v_i)$ is non-empty.
 Denote $G^*=G- \{v_jv : v\in S\} +
\{v_iv : v\in S\}$. Then $\lambda (G) < \lambda (G^*)$.
\end{lemma}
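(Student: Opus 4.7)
The plan is to prove Lemma~\ref{lem-22} through the variational (Rayleigh quotient) characterization of the spectral radius, exploiting the strict positivity of the Perron vector of $G$. Let $x=(x_1,\ldots,x_n)^T$ be the unit Perron vector of $G$. Since $G$ is connected, the Perron--Frobenius theorem guarantees $x_v>0$ for every $v$, and $\lambda (G)=x^T A(G) x$. The edge sets of $G^*$ and $G$ differ in exactly the swap $\{v_j v : v\in S\}\leftrightarrow \{v_i v : v\in S\}$, so a direct expansion of the quadratic form yields
$$ x^T A(G^*) x - x^T A(G) x = 2\,(x_i-x_j)\sum_{v\in S} x_v. $$
Because $x_i\ge x_j$, $x_v>0$, and $S\neq\emptyset$, the right-hand side is nonnegative. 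Combined with the Rayleigh inequality $\lambda (G^*)\ge x^T A(G^*) x$ (recall $\|x\|=1$), this already produces the weak bound $\lambda (G^*)\ge \lambda (G)$.

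For the strict inequality I would split into two cases. If $x_i>x_j$, then the display above is strictly positive, and the chain $\lambda (G^*)\ge x^T A(G^*) x > x^T A(G) x = \lambda (G)$ finishes the argument. If $x_i=x_j$, I would proceed by contradiction: assume $\lambda (G^*)=\lambda (G)$. Then $x$ attains the Rayleigh quotient for the symmetric matrix $A(G^*)$ and is therefore an eigenvector of $A(G^*)$ for its largest eigenvalue $\lambda (G^*)$. Comparing the $v_i$-coordinates of the eigenvalue equations $A(G)x=\lambda (G)x$ and $A(G^*)x=\lambda (G^*)x$ and subtracting gives $\sum_{v\in S} x_v = 0$, contradicting $x>0$ and $S\neq\emptyset$.

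The only non-routine step is the equality case $x_i=x_j$, and its resolution crucially relies on the Perron vector of $G$ being \emph{strictly} positive; this is precisely where the connectedness hypothesis on $G$ is used. By contrast, $G^*$ itself need not be connected for the argument to work, because we only use $x$ as a test vector for $A(G^*)$ together with the general fact that any vector attaining the maximum Rayleigh quotient of a symmetric matrix must be an eigenvector for its top eigenvalue. No additional structural assumption on $S$ beyond non-emptiness is needed.
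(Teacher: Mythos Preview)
The paper does not give its own proof of this lemma; it is quoted from Wu, Xiao, and Hong \cite{WXH2005} and used as a black box. Your argument is correct and is in fact the standard proof: the Rayleigh quotient comparison gives $\lambda(G^*)\ge \lambda(G)$ immediately, and the case split on $x_i>x_j$ versus $x_i=x_j$ (with the eigenvector contradiction in the latter) is precisely how one upgrades to a strict inequality.
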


\medskip
The main steps of the proof of Theorem \ref{thm-LLP} can be outlined as below.

\medskip
First of all, suppose that $G$ is a graph  on $n$ vertices with
the spectral radius $\lambda(G) \ge \lambda (K_{\lceil \frac{n}{2} \rceil, \lfloor \frac{n}{2} \rfloor}^+)$ and
the number of triangles $t(G)\le \lfloor \frac{n}{2}\rfloor $,
our aim is to characterize the structure of $G$ and
show that $G$ is the required extremal graph $K_{\lceil \frac{n}{2} \rceil, \lfloor \frac{n}{2} \rfloor}^+$.
Let $\bm{x}=(x_1,\ldots ,x_n)^T$ be the unit Perron vector of $G$, and
choose $u^*\in V(G)$ as a vertex such that $x_{u^*}=\max_{u\in V(G)} x_u$. Let $t^*$ be the number of triangles in $G$ containing the vertex $u^*$. Denote by $A=N_G(u^*)$ the set of neighbors of $u^*$, and $B$ the set of remaining vertices outside $A\cup \{u^*\}$; see Figure \ref{fig-Erdos-Rade}.

\begin{itemize}
\item[$\spadesuit$]
Firstly, applying Lemma \ref{lem-21},
we will show that if $n$ is even, then $|A| \ge  \frac{n}{2} +1$.
If $n$ is odd, then $|A| \ge \frac{n+1}{2}$; see  the forthcoming Claim 1. Moreover, we shall show that $t^*=e(A)\ge 1$ and $|B|\ge 1$, and there is no vertex of $B$ which is adjacent to all vertices of $A$; see Claims 2 and 3.

\item[$\heartsuit$]
Secondly, we shall prove that $t=t^*$. In other words,
all triangles in $G$ must contain the vertex $u^*$; see Claim 4.
By Lemma \ref{lem-22},
we shall prove further that the induced subgraph
$G[A] = K_{1,t^*} \cup (|A| - t^* -1)K_1$; see Claim 5.

\item[$\clubsuit$]
Thirdly, we denote by $u_0$
the central vertex of the star $K_{1,t^*}$ in $G[A]$.
Then we will show that $d_B(u_0) =0$; see Claim 6.
Moreover, we will prove that $B$ is an independent set in $G$,
and then $N_G(w)=A\setminus \{u_0\}$ for each $w\in B$;
see Claims 7 and 8.

\item[$\diamondsuit$]
Finally, denote by $A_0$ the set of isolated vertices of $G[A]$,
we will  show that
$\sum_{u\in A_0} x_u < x_{u^*}$; see Claim 9.
Furthermore, we will finish the proof by showing that $|A|=t+1$
and $A_0=\varnothing$; see Claim 10.
Consequently, $G$ must be the extremal graph 
$K_{\lceil \frac{n}{2} \rceil, \lfloor \frac{n}{2} \rfloor}^+$.
\end{itemize}

Generally speaking, the above steps introduced the key ideas of the approach of this paper
for treating the problem on counting triangles.
In next section, we shall provide the detailed proofs.

\section{Proof of Theorem \ref{thm-LLP}}

\label{sec3}

Now, we are ready to prove Theorem \ref{thm-LLP}.

\begin{proof}[{\bf Proof of Theorem \ref{thm-LLP}}]
Assume that $G$ is a graph  on $n$ vertices with
 $\lambda(G) \ge \lambda (K_{\lceil \frac{n}{2} \rceil, \lfloor \frac{n}{2} \rfloor}^+)$ and
 $t(G)\le \lfloor \frac{n}{2}\rfloor $,
 our goal is to prove that $G=K_{\lceil \frac{n}{2} \rceil, \lfloor \frac{n}{2} \rfloor}^+$.
Moreover,  we may assume further that
$\lambda (G)$ is the maximum. Then $G$ is connected. 
Otherwise, adding an edge between two components can increase
the spectral radius and preserve the number of triangles.
Let $\bm{x}=(x_1,x_2,\ldots ,x_n)^T$ be the unit Perron vector of $G$.
Furthermore, we may assume that $u^*\in V(G)$ is a vertex such that $x_{u^*}=\max_{u\in V(G)} x_u$.
For convenience, we denote $\lambda = \lambda (G) $ and $t=t(G)$.
Let $t^*$ be the number of triangles containing $u^*$ in $G$. Let $A=N_G(u^*)$ and $B=V(G) \setminus (N_G(u^*) \cup \{u^*\})$. Clearly, we have $t^*=e(A)$.
In what follows, we shall present the proof step by step and show several key claims.

\medskip
{\bf Claim 1.}
If $n$ is even, then $|A| \ge  \frac{n}{2} +1$.
If $n$ is odd, then $|A| \ge \frac{n+1}{2}$.
Consequently, we get
 $|A| \ge \lceil \frac{n}{2}\rceil$.
Combining with $|A| + |B| +1=n$, we have
 $|B|\le \lfloor \frac{n}{2}\rfloor -1$.

\begin{proof}
By the maximality of $x_{u^*}$, we can get
$\lambda x_{u^*}= \sum_{u\in A} x_u \le |A| x_{u^*}$,
which leads to $ \lambda \le |A|$.
By Lemma \ref{lem-21}, we know that
$\lambda^2 >
\lfloor \frac{n^2}{4} \rfloor +2$.
If $n$ is even, then $|A| \ge \lambda > \frac{n}{2}$
and so $|A| \ge \lceil \frac{n}{2} \rceil +1$.
If $n$ is odd, then $|A| \ge \lambda >
(\frac{1}{4}(n^2-1) +2)^{1/2} >
\frac{n-1}{2}$. Since $|A|$ is an integer,
we get $|A| \ge \frac{n-1}{2} +1
=\lceil \frac{n}{2} \rceil$.
\end{proof}

\medskip
{\bf Claim 2.}
$t^* =e(A)\ge 1$ and $|B|\ge 1$.

\begin{proof}
Note that $\lambda x_{u^*}= \sum_{u\in A} x_u$. Then
\begin{equation} \label{eqeq-6}
\lambda^2 x_{u^*} = \sum_{u\in A}
\sum_{w\in N(u)} x_w =
|A| x_{u^*} + \sum_{u\in A} d_A(u) x_u + \sum_{w\in B}
d_A(w) x_w.
\end{equation}
Recall that $t^*=e(A)$. Assume on the contrary that $t^*=0$. Then $d_A(u)=0$ for each $u\in A$.
Since $\sum_{w\in B} d_A(w)x_w \le
e(A,B)x_{u^*} \le |A| |B|x_{u^*}$. Thus (\ref{eqeq-6}) gives
\[  \lambda^2 x_{u^*}  \le
 |A| x_{u^*}  + \sum_{w\in B} d_A(w) x_w
 \le |A|(|B| +1) x_{u^*} \le
 \left\lfloor  \frac{n^2}{4} \right\rfloor x_{u^*}. \]
It follows that
$\lambda^2 \le \lfloor \frac{n^2}{4}\rfloor$, which leads to
a contradiction with Lemma \ref{lem-21}.
Thus, we get $t^*\ge 1$. If $|B|=0$, then
$V(G)=\{u^*\} \cup A$ and $|A| =n-1$.
Using (\ref{eqeq-6}) and $t^*\le t \le \lfloor \frac{n}{2}\rfloor$,
we have $\lambda^2 \le |A| +2t^*
\le n-1 + 2\lfloor \frac{n}{2}\rfloor \le \lfloor \frac{n^2}{4}\rfloor +2$, where the last inequality holds for every $n\ge 3$ and $n\neq 4$.
This is a contradiction as Lemma \ref{lem-21}.
For the case $n=4$, we have $|B|=3$ and $t^*\in \{1, 2\}$.
Then either $G=K_1\vee (K_2\cup K_1)$ or $G=K_1\vee P_3 = K_{2,2}^+$.
Since $\lambda (G) \ge \lambda (K_{2,2}^+)$,
we get $G=K_{2,2}^+$, which is the desired extremal graph.
\end{proof}

In fact, the key idea of the remaining proof
depends on the full use of (\ref{eqeq-6}).
In particular, we will analyse the structure of $G$ to
refine the bound on $e(A,B)$.
Let $B_i$ be the set of vertices of $B$ which has
exactly $i$ non-neighbors in $A$, that is,
\[  B_i:=\{w\in B: d_A(w) = |A| -i\}. \]
Denote $b_i=|B_i|$ for $i\in \{0,1,2,\ldots ,|A|\}$. Then
\begin{equation}  \label{eqeq-7}
\begin{aligned}
e(A,B) &\le
(|A| -2) (|B| -b_0-b_1) +
(|A| -1) b_1 + |A| b_0  \\
& = (|A| -2) |B| +2b_0 +b_1.
\end{aligned}
\end{equation}

\medskip
{\bf Claim 3.}  $b_0=0$.

\begin{proof}
For every $w\in B_0$,
there are $t^*$ triangles consisting of $w$
and an edge of $A$.
On the other hand, each edge of $A$, together with
the vertex $u^*$ forms a triangle.
Thus, we have
$b_0t^* + t^* \le t$, and  then
$b_0 \le \frac{t}{t^*} -1$.
Suppose on the contrary that
$b_0\neq 0$, and so $b_0\ge 1$.
Then we have $1\le t^* \le \frac{t}{2}$ and $t\ge 2$.
Note that $b_0+b_1\le |B|$ and $b_0 \le \frac{t}{t^*} -1$.
It follows that $2b_0+b_1 \le |B| + \frac{t}{t^*} -1$.
By (\ref{eqeq-7}), we obtain
\[  \sum_{w\in B} d_A(w) x_w \le e(A,B) x_{u^*}
\le \left(|A||B| - |B| + \frac{t}{t^*} -1 \right)x_{u^*}. \]
Moreover, one can observe that
\begin{equation} \label{eq-2eA}
\sum_{u\in A} d_A(u)x_u \le 2e(A)x_{u^*} =2t^*x_{u^*}.
\end{equation}
Thus, we get from (\ref{eqeq-6}) and (\ref{eq-2eA})  that
\begin{equation} \label{eqeq-8}
 \lambda^2x_{u^*} \le
\left( |A| + |A||B| - |B| + \frac{t}{t^*} -1 +2t^* \right)x_{u^*}.  \end{equation}
Denote $g(|B|)=|A| +|A| |B| - |B|$
and $f(t^*)=\frac{t}{t^*} -1 +2t^*$.
Recall that $|A|=n-1-|B|$,
by computation, we know that
$g(|B|)$ attains the maximum at $|B| = \lfloor \frac{n}{2}\rfloor -1$. Thus, we get
$g(|B|) \le g(\lfloor \frac{n}{2}\rfloor -1)
= \lfloor \frac{n^2}{4}\rfloor - \lfloor \frac{n}{2}\rfloor +1$.
On the other hand, observe that
$f(t^*)$ is convex on $t^* \in [1,\frac{t}{2}]$.
Then $f(t^*) \le \max\{ f(1), f(\frac{t}{2})\}
= t+1 \le  \lfloor \frac{n}{2}\rfloor +1$.
Therefore, we get from (\ref{eqeq-8}) that
$\lambda^2 \le \lfloor \frac{n^2}{4}\rfloor +2$,
which contradicts with $\lambda^2> \lfloor \frac{n^2}{4}\rfloor +2$ by
Lemma \ref{lem-21}.
Hence, we get $b_0=0$.
\end{proof}

{\bf Claim 4.} We have $t=t^*$, that is,
all triangles in $G$ contain the vertex $u^*$.

\begin{proof}

Assume on the contrary that $t^*\le t-1$.
To obtain a contradiction,
 we are going to prove the following
three items, respectively.

\begin{itemize}

\item[(a)] $G[A] = K_{1,t^*} \cup (|A| - t^* -1)K_1$.

\item[(b)] $e(B)=0$, that is, $B$ is an independent set in $G$.

\item[(c)] Let $u_0$ be the central vertex of the star $K_{1,t^*}$ in $G[A]$.
Then $d_B(u_0) =0$.
\end{itemize}

\begin{proof}
For (a), suppose on the contrary that
$G[A]\neq K_{1,t^*} \cup (|A| - t^* -1)K_1$.
Then for each vertex $w\in B_1$, i.e.,
$d_A(w)= |A|-1$, the non-adjacent vertices of $w$
in $A$ is a single vertex,
there is at least one edge  $\{u,v\} \in E(G[A])$
such that  $\{u,v\} \subseteq N_G(w) \cap A$.
This implies that there are at least $b_1$ triangles
consisting of a vertex of $B_1$ and an edge of $G[A]$.
Thus, we obtain $b_1 + t^* \le t$.
Combining (\ref{eqeq-7}) with Claim 3, we get
\[  e(A,B) \le (|A|-2) |B| + b_1 \le  (|A|-2) |B|  + t-t^*.\]
Using (\ref{eqeq-6})  and (\ref{eq-2eA}) again, we have
\begin{equation} \label{eqeq-9}
\lambda^2 x_{u^*} \le
|A| x_{u^*} + 2t^*x_{u^*} +e(A,B)x_{u^*}
\le (|A| + |A||B| - 2|B| + t + t^*)x_{u^*}.
\end{equation}
By computation, we know that
the maximum of $|A| + |A||B| - 2|B|$
attains at $|B|=\lfloor \frac{n}{2}\rfloor -2$
since $|A| +|B| + 1=n$.
Then we get $|A| + |A||B| - 2|B| \le
\lfloor \frac{n^2}{4}\rfloor
-2 \lfloor \frac{n}{2}\rfloor +3$.
Moreover, recall that  $t\le \lfloor \frac{n}{2}\rfloor$, and
then $t^*\le t-1 \le \lfloor \frac{n}{2}\rfloor -1$. Therefore, we get  by (\ref{eqeq-9}) that
$ \lambda^2 \le  \lfloor \frac{n^2}{4}\rfloor +2 $, which is a contradiction by Lemma \ref{lem-21}.

For (b), suppose on the contrary that
there exists an edge $\{w_i,w_j\} \in E(G[B])$,
say $w_i \in B_i$ and $w_j \in B_j$.
By using (\ref{eqeq-6}) and (\ref{eq-2eA}), we have
\begin{equation} \label{eqeq-10}
  \lambda^2\le |A| + 2t^* + e(A,B).
  \end{equation}
Since $b_0=0$ by Claim 3,
we get  that
$e(A,B) \le |A| |B| - (|B|-2) -i-j $.
Note that $d_A(w_i)= |A| -i$
and $d_A(w_j)= |A| -j$. Then
$w_i,w_j$ have at least $d_A(w_i) + d_A(w_j) - |A| $
common neighbors. Thus there are
$|A| -i-j$ triangles containing $w_i$ and $w_j$,
and we have $t^* + (|A| -i-j) \le t$,
which together with (\ref{eqeq-10})
yields
\begin{equation*}
\begin{aligned}
 \lambda^2 &\le
|A| + 2t^* + |A||B| -|B| +2- i-j   \\
& \le t^* +t + |A||B| - |B| +2.
\end{aligned}
\end{equation*}
Similarly, we have
$t^*+t\le 2t-1 \le 2\lfloor \frac{n}{2}\rfloor -1$.
Since
$|A| +|B|+1=n$ and $|A||B| - |B|$ attains the maximum at
$|B|=\lfloor \frac{n}{2}\rfloor -1$.
Then we get $(|A| -1) |B| \le
(\lceil \frac{n}{2}\rceil -1)(\lfloor \frac{n}{2}\rfloor -1)
\le \lfloor \frac{n^2}{4}\rfloor -n +1$.
Thus, we have
$ \lambda^2 \le  \lfloor \frac{n^2}{4}\rfloor -n +1 +
2\lfloor \frac{n}{2}\rfloor -1 +2 \le  \lfloor \frac{n^2}{4}\rfloor +2  $, a contradiction.

For (c), assume on the contrary that
there exists an integer
$i\in \{1,2,\ldots ,|A|\}$ and $w_i \in B_i$ such that
$u_0w_i$ is an edge in $G[B]$.
We know from  Claim 3 that $b_0=0$.
Since $d_A(w_i) = |A| -i$,
we obtain
 \begin{equation} \label{eqeq-11}
 e(A,B) \le (|A| -1)|B| - (i-1).
 \end{equation}
If $i\le t^*$, then by applying Part (a),
there are at least $t^*-i$ triangles
consisting of $u_0,w_i$ and a leaf vertex of $K_{1,t^*}$.
Thus, we have $t^* + (t^* -i) \le t$,
and so $-i\le t-2t^*$. Then
$e(A,B) \le (|A| -1)|B| +1 +t-2t^*$.
By (\ref{eqeq-10}), we have
\[  \lambda^2 \le |A| +(|A| -1)|B| +1 +t.   \]
Since $t\le \lfloor \frac{n}{2}\rfloor$ and
$|A| + (|A| -1) |B|$ attains the maximum
at $|B|=\lfloor \frac{n}{2}\rfloor -1$.
Therefore, we get $\lambda^2
\le \lceil \frac{n}{2}\rceil + (\lceil \frac{n}{2}\rceil  -1)
(\lfloor \frac{n}{2}\rfloor -1) +1 + \lfloor \frac{n}{2}\rfloor
\le \lfloor \frac{n^2}{4}\rfloor +2$, a contradiction.
Therefore, we must have $i\ge t^* +1$, that is,
$-i +1\le -t^*$.
Then (\ref{eqeq-11}) gives
$e(A,B) \le (|A| -1)|B| -t^*$. Furthermore, (\ref{eqeq-10})
reduces to
\[ \lambda^2 \le |A| + (|A| -1)|B| +t^*
\le  |A| + (|A| -1)|B| +t -1 \le \lfloor \tfrac{n^2}{4}\rfloor, \]
which is a contradiction. Thus, we get $d_B(u_0)=0$.
\end{proof}

In conclusion,  if we assume that
$t^*\le t-1$, then by the items (a), (b) and (c),
we see that $G[A]$ consists of a star $K_{1,t^*}$
with some isolated vertices,
$B$ is an independent set of $G$, and $N_G(w) \subseteq
A \setminus \{u_0\}$
for every vertex $w\in B$, where $u_0$ is the center of the
star $K_{1,t^*}$ in $G[A]$. Consequently,
it follows that $t=t^*$, which contradicts with the
previous assumption.
Therefore, we have $t^*=t$.
\end{proof}

\medskip
{\bf Claim 5.} $G[A] = K_{1,t^*} \cup (|A| - t^* -1)K_1$.

\begin{proof}
Suppose on the contrary that
 $G[A] \neq K_{1,t^*} \cup (|A| - t^* -1)K_1$.
First of all, we can see that $b_1=0$.
 Otherwise, if $b_1\neq 0$,
 then there exists a triangle consisting of
 a vertex of $B_1$ and an edge of $G[A]$,
 which leads to $t^* < t$. This contradicts with Claim 4.
Applying (\ref{eqeq-7}) and Claim 3, we get
$e(A,B) \le (|A| -2) |B|$, equality holds if and only if
$d_A(w)=|A|-2$ for every $w\in B$, that is, $B=B_2$.
By (\ref{eqeq-10}), we get
\begin{align*}
 \lambda^2   \le |A| + 2t^* + (|A| -2) |B|
 \le \begin{cases}
 \frac{n^2}{4} +3 & \text{if $n$ is even;} \\[1mm]
 \frac{n^2}{4} +2 & \text{if $n$ is odd,}
 \end{cases}
 \end{align*}
 where the last inequality holds since $t^*\le \lfloor \frac{n}{2}\rfloor$ and  $|A| + (|A| -2)|B|$ attains the maximum at
 $|B|=\lfloor \frac{n}{2}\rfloor -2$.
 Therefore, if $n$ is odd, then we get a contradiction
 to (\ref{eq-n2+2}).

 In what follows, we shall also show a contradiction
 for even $n$.
 In this case, by Claim 1, we get $|A| \ge \frac{n}{2} +1$
 and $|B| \le \frac{n}{2}-2$.
 If $|B| \neq \frac{n}{2} -2$, then
 $|B| \le \frac{n}{2}-3$ and
 $\lambda^2 \le (\frac{n}{2} +2) + 2\frac{n}{2}
 + \frac{n}{2} (\frac{n}{2} -3) \le \frac{n^2}{4} +2$,
 which is a contradiction.
 Hence we get $|B| = \frac{n}{2} -2$
 and $|A|= \frac{n}{2}+1$. If $t^* \le \frac{n}{2} -1$,
 then we get $\lambda^2 \le
 (\frac{n}{2} +1) + 2(\frac{n}{2} -1)
 + (\frac{n}{2} -1)(\frac{n}{2} -2) = \frac{n^2}{4} +1 $,
 a contradiction.
 Thus, we obtain $t=t^*=\frac{n}{2}$.
 Similarly, if $b_i\neq  0$ for some $i\ge 3$,
 then $e(A,B) \le (|A| -2)|B| -1$, which yields
 $\lambda^2 \le \frac{n^2}{4} +2$, a contradiction.
 Therefore, we have $b_3=b_4=\cdots =b_{|A|}=0$
 and $B=B_2$. To sum up, for even $n$,
 we have proved that
 $|A|=\frac{n}{2} +1$, $|B|=\frac{n}{2}-2$,
 $t^*=t=\frac{n}{2}$ and $d_A(w)=|A|-2$
 for every $w\in B$.

 \medskip
 Next, we continue the proof in two cases, and
 show that $\lambda (G) <\lambda(K_{\frac{n}{2}, \frac{n}{2}}^+) $.

 \medskip
 {\bf Case 1.}  $G[A]$ is a tree.
 Since $G[A]$ is not a star $ K_{1,\frac{n}{2}}$, the longest path of $G[A]$
 has at least $4$ vertices.
 On the other hand, since $t=t^*$ and each vertex of $B$ has exactly
 $|A| -2$ neighbors in $A$,
we can observe that
the longest path of $G[A]$ has at most
 $5$ vertices.   Otherwise,
 if $G[A]$ contains a path $P_6$, say $u_1u_2\cdots u_6$, then for any $w\in B$, there exists an $i$
such that the edge $\{u_i,u_{i+1}\} \subseteq  N(w)$,
since $w$ has exactly two non-neighbors in $A$.
 This leads to $t^* < t$, a contradiction.
Invoking this observation, the longest path in $G[A]$ is either $P_4$ or $P_5$.

\medskip
{\bf Subcase 1.1.}
 If $P_4$ is a longest path in $G[A]$,
 denote by $P_4=u_0u_1u_2u_3$,
 then other vertices of $A$ are adjacent to either
 $u_1$ or $u_2$.
Moreover,  for each vertex $w\in B$, we have
$d_A(w)=|A|-2$. Since $t=t^*$, we get
$N(w)=A \setminus \{u_1,u_2\}$;
see the graph $G_1$ in Figure \ref{fig-P4}.

  \begin{figure}[H]
\centering
\includegraphics[scale=0.8]{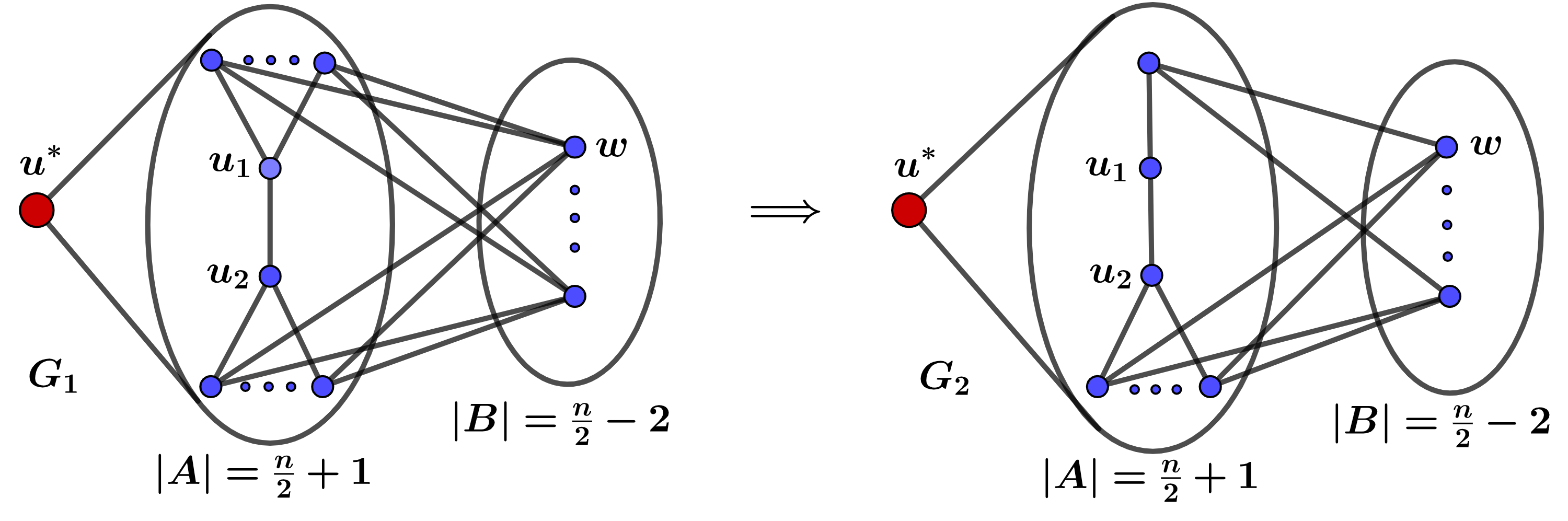}
\caption{The longest path of $G[A]$ is $P_4$.}
\label{fig-P4}
\end{figure}

Without loss of generality, we may assume that
$x_{u_1} \le x_{u_2}$.
We define $G_2$ as the graph obtained from $G_1$
by deleting $d_A(u_1) -1$ neighbors of $u_1$, and joining
these vertices to $u_2$.
By Lemma \ref{lem-22}, we have $\lambda (G_1) < \lambda (G_2)$.
Next, we shall show that $\lambda (G_2) < \lambda(K_{\frac{n}{2}, \frac{n}{2}}^+)$.
By a direct computation,
we know that $\lambda (G_2)$ is the largest root of
\[  g_2(x) := x^6 - (\tfrac{n^2}{4} - \tfrac{n}{2} +3) x^4 -
nx^3 + (\tfrac{5n^2}{4} - 7n +8 ) x^2 + (2n-8)x - (n-4)^2. \]
Recall in Lemma \ref{lem-21} that $\lambda(K_{\frac{n}{2}, \frac{n}{2}}^+)$ is the largest root of
$f(x)=x^3-x^2 - \frac{n^2}{4}x + \frac{n^2}{4}-n $.
For every $x$ satisfying $x^2 \ge \frac{n^2}{4} +2$,
it follows that
\begin{align*}
&g_2(x)- x^3f(x) \\
&=
x^5 + (\tfrac{n}{2} -3) x^4 - \tfrac{n^2}{4}x^3
+ (\tfrac{5}{4}n^2 - 7n +8)x^2 + (2n-8)x - (n-4)^2 \\
&> (\tfrac{n}{2} -3) x^4 + 2x^3
+ (\tfrac{5}{4}n^2 - 7n +8)x^2 + (2x-n+4)(n-4) >0.
\end{align*}
Consequently, we get $g_2(x) > x^3f(x)$ for
every $x \ge \lambda(K_{\frac{n}{2}, \frac{n}{2}}^+)$.
Then $g_2(\lambda(K_{\frac{n}{2}, \frac{n}{2}}^+)) >0$
and $g_2(x) >0$ for every $x\ge
\lambda(K_{\frac{n}{2}, \frac{n}{2}}^+)$,
which implies $\lambda (G_2)<
\lambda(K_{\frac{n}{2}, \frac{n}{2}}^+)$.
This is a contradiction with the assumption.

\medskip
{\bf Subcase 1.2.}
If $P_5$ is a longest path of $G[A]$, say
$P_5=u_0u_1u_2u_3u_4$, then
we have $N(w)= A \setminus \{u_1,u_3\}$
for every $w\in B$. Otherwise,
if $N(w)\neq  A \setminus \{u_1,u_3\}$, then by $d_A(w)=|A| -2$,
there exists a triangle containing $w$, which contradicts with $t^*=t$.
Furthermore, each vertex of $A\setminus \{u_0,\ldots ,u_4\}$
is adjacent to either $u_1$ or $u_3$. Indeed, such a vertex
can not adjacent to $u_0$ or $u_4$ since $P_5$ is a longest path.
Moreover, such a vertex can not be adjacent to $u_2$. Otherwise,
if $uu_2$ is an edge in $G[A]$, then $wuu_2$ is a triangle,
contradicting with $t^*=t$.
Therefore, the structure of $G$ can be determined as below; see
the graph $G_3$ in Figure \ref{fig-P5}.

    \begin{figure}[H]
\centering
\includegraphics[scale=0.8]{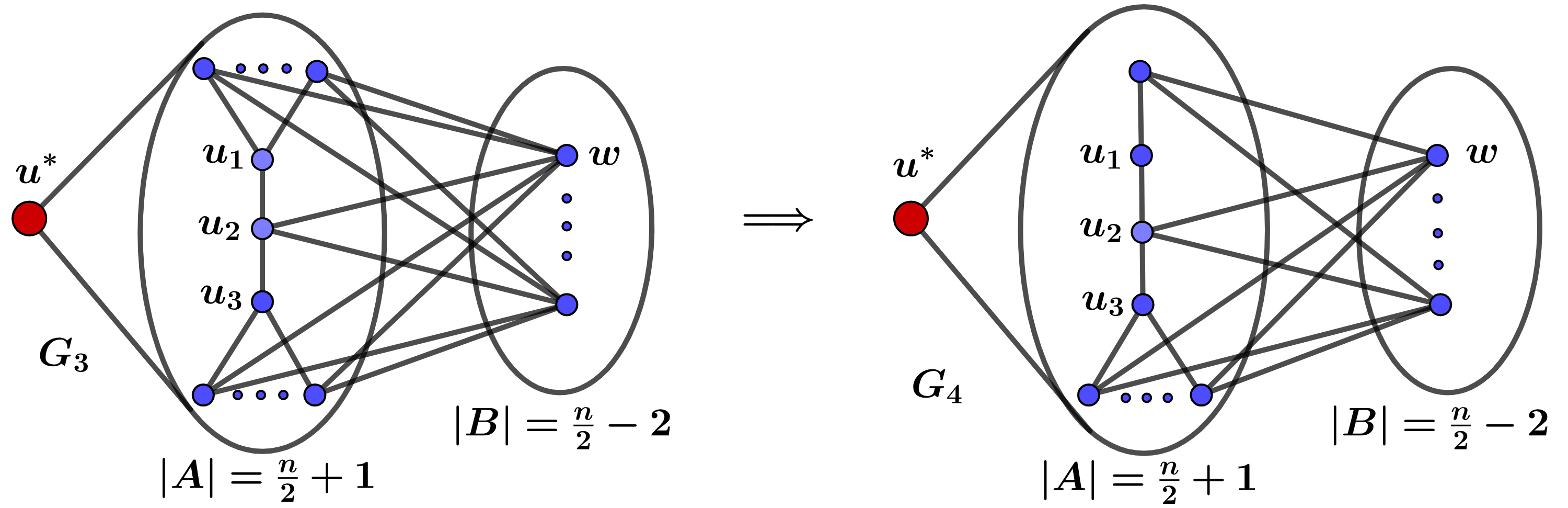}
\caption{The longest path of $G[A]$ is $P_5$.}
\label{fig-P5}
\end{figure}

Similarly, we may assume further that
$x_{u_1} \le x_{u_3}$.
Let $G_4$ be the graph obtained from $G_3$
by removing $d_A(u_1) -1$ neighbors of $u_1$, and adding
these vertices adjacent to $u_3$.
By Lemma \ref{lem-22} again,
we have $\lambda (G_3) < \lambda (G_4)$.
Moreover, we shall show that $\lambda (G_4) < \lambda(K_{\frac{n}{2}, \frac{n}{2}}^+)$.
By computations,
we obtain that $\lambda (G_4)$ is the largest root of
\begin{align*}  g_4(x) &:=
x^7 - (\tfrac{n^2}{4} - \tfrac{n}{2} +3)x^5 - nx^4 +
(\tfrac{5}{4}n^2 - 7n +5) x^3  \\
&\quad   + (3n-16)x^2 - (\tfrac{3}{2}n^2 - 14n +31)x.
\end{align*}
Moreover, for $x^2\ge \frac{n^2}{4} +2$,
one can verify that
\begin{align*}
& \tfrac{1}{x}g_4(x) - x^3 f(x) \\
&= x^5 + (\tfrac{n}{2} -3) x^4 - \tfrac{n^2}{4}x^3
 + (\tfrac{5}{4}n^2 - 7n +5) x^2  + (3n-16) x - (\tfrac{3}{2}n^2 -14n +31) \\
 & > (\tfrac{n}{2} -3)x^4 + 2x^3 +
 (\tfrac{5}{4}n^2 - 7n +5)x^2 +
 (3n-16)x -(\tfrac{3}{2}n^2 -14n +31) >0 .
\end{align*}
In other words, we get $g_4(x) > x^4f(x)$
for every $x\ge  \lambda(K_{\frac{n}{2}, \frac{n}{2}}^+)$.
Recall in Lemma \ref{lem-21} that $ \lambda(K_{\frac{n}{2}, \frac{n}{2}}^+)$ is the largest root of $f(x)$.
As a result, it follows that
$g_4( \lambda(K_{\frac{n}{2}, \frac{n}{2}}^+)) > 0$, and
$g_4(x) >0$ for every $x\ge  \lambda(K_{\frac{n}{2}, \frac{n}{2}}^+)$, which yields $\lambda (G_4) <
 \lambda(K_{\frac{n}{2}, \frac{n}{2}}^+)$.

 \medskip
  {\bf Case 2.}  $G[A]$ is not a tree.
  In this case, $G[A]$ has a cycle.
  We claim that $G[A]$ has no cycle of  length at least $5$.
  Otherwise, if $G[A]$ contains a cycle $C_t$ with $t\ge 5$, then there exists a triangle consisting of
  an edge of $C_t$ and a vertex of $B$, since
  each vertex of $B$ has exactly two non-neighbors in
  $A$. Therefore, we get $t^*<t$, which is a contradiction.
  Similarly, we can see that
  $G[A]$ has no triangle.
  Thus, the longest cycle of $G[A]$ is $C_4$,
  and the copy of $C_4$ is an induced copy.
  Otherwise, if this copy of $C_4$ is not induced,
  then it has a chord, and so
  $G[A]$ contains a triangle, which contradicts with
  $t^*=t$.
 Assume that $C_4=u_0u_1u_2u_3$ is the longest cycle of $G[A]$.

 \medskip
 {\bf Subcase 2.1.} $|A|=5$, that is, $n=8$ and $|B|=2$.
 Then $G[A]=C_4 \cup K_1$.
 For each $w\in B$, we know that $w$ has exactly two
 non-neighbors in $\{u_0,u_1,u_2,u_3\}$.
 Since $t^*=t$ and there is no triangle containing any vertex
 $w\in B$,
we have two possibilities, i.e., either $N_A(w)=
 A \setminus \{u_1,u_2\}$ or $N_A(w)= A \setminus \{u_0,u_3\}$.
 Thus, $G$ is determined as  $G_5$ or $G_6$
 in Figure \ref{fig-C4=n8}.
 By calculations, we obtain that $\lambda (G_5)\approx 3.934 <  \lambda(K_{4, 4}^+)$,
 and $\lambda (G_6) \approx 3.884 < \lambda(K_{4, 4}^+)$,
 which  contradicts with the assumption.

         \begin{figure}[H]
\centering
\includegraphics[scale=0.8]{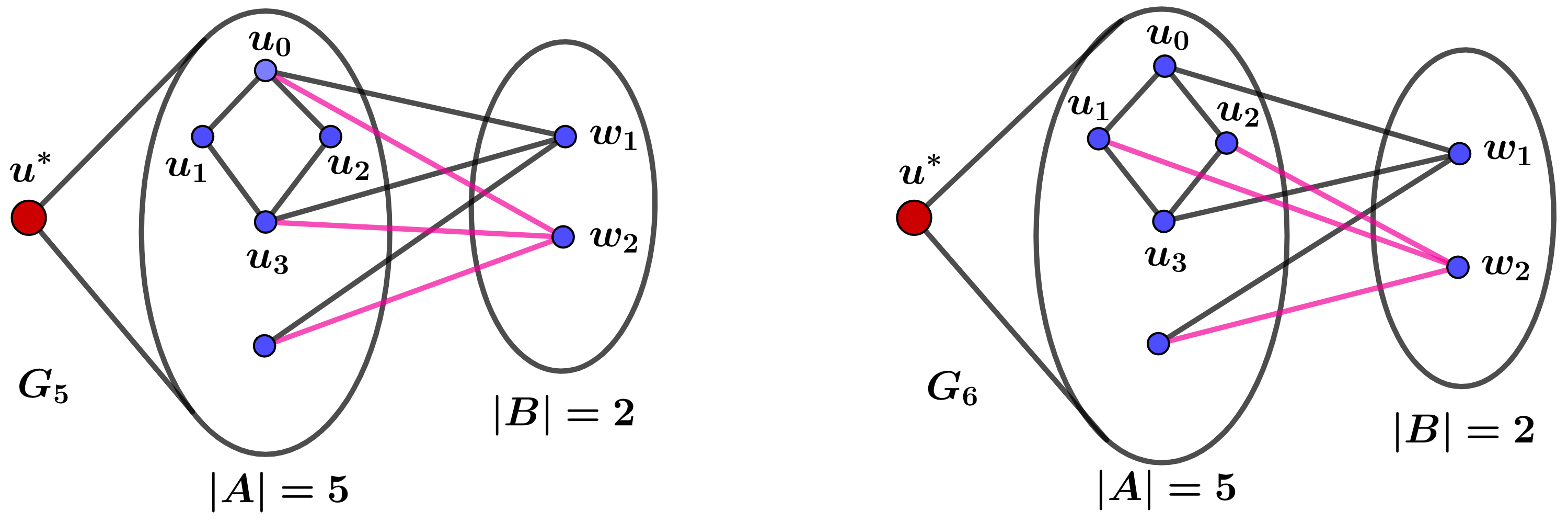}
\caption{The longest cycle of $G[A]$ is $C_4$ and $|A|=5$.}
\label{fig-C4=n8}
\end{figure}

 {\bf Subcase 2.2.} $|A|\ge 6$.
 In this case, we have $n\ge 10$ and $e(A)=|A| -1 \ge 5$.
 Keeping in mind that $t^*=t$, and
 there is no triangle containing any vertex $w\in B$.
Moreover, each vertex $w \in B$ has exactly two non-neighbors in $ \{u_0,u_1,u_2,u_3\}$.
Fixing a vertex $w_0\in B$, we may assume that
 $N_A(w_0)= A \setminus \{u_1,u_2\}$.
 For each $u\in A \setminus \{u_0,u_1,u_2,u_3\}$,
we have $uw \in E(G)$, and $u$ can not be adjacent to
$u_0$ or $u_3$, that is, $N_A(u) \subseteq \{u_1,u_2\}$.
Since $e(A)\ge 5$,
there exists an edge in $G[A]$ outside $C_4$,
say $uu_2 \in E(G)$, which leads to
$N_A(w) = A \setminus \{u_1,u_2\}$ for other vertex
$w\in B\setminus \{w_0\}$.
Otherwise, if there is a vertex $w\in B\setminus \{w_0\}$ such that
$N_A(w) \neq A \setminus \{u_1,u_2\}$, then
$N_A(w) = A \setminus \{u_0,u_3\}$, i.e.,
$w$ is adjacent to both $u_1$ and $u_2$,  so $wu_2u$
forms a triangle, contradicting with $t^*=t$.
Thus, the structure of $G$ is determined as in Figure \ref{fig-C4}.

        \begin{figure}[H]
\centering
\includegraphics[scale=0.8]{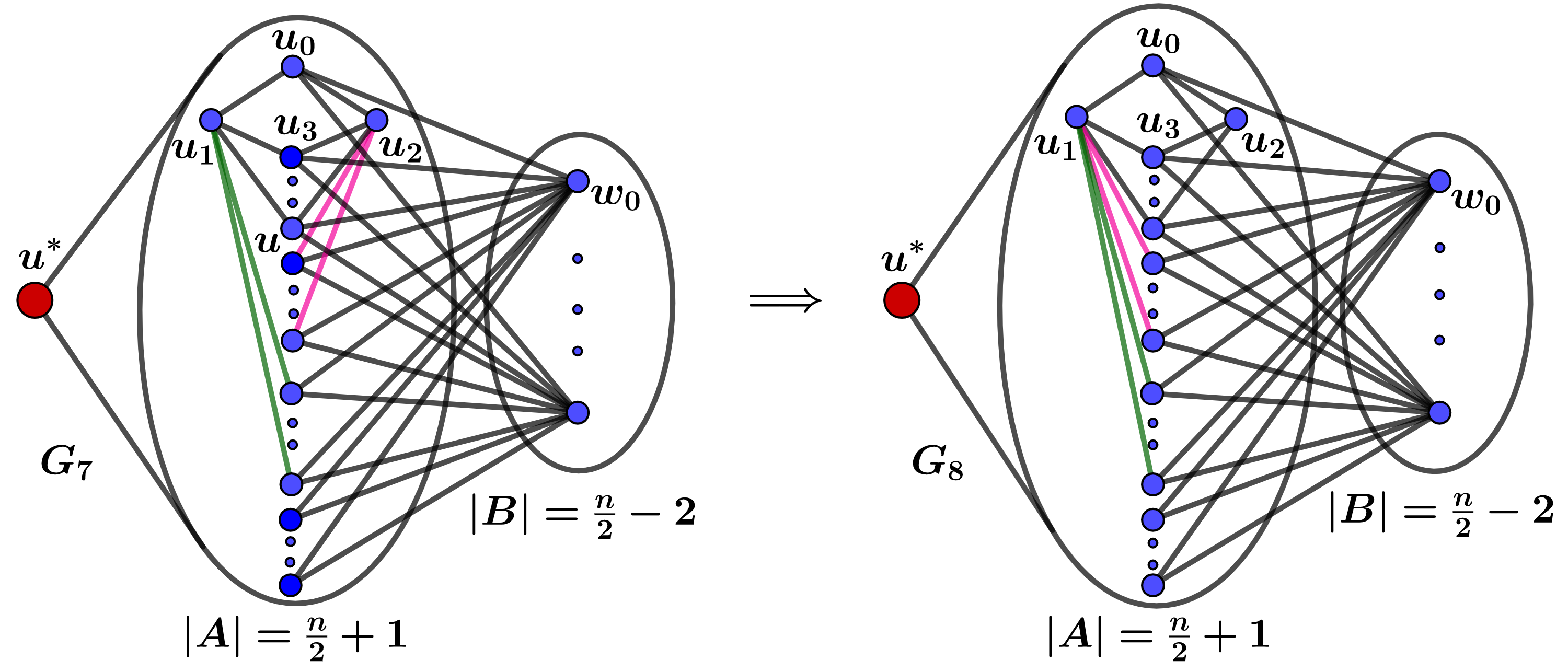}
\caption{The longest cycle of $G[A]$ is $C_4$
and $|A| \ge 6$.}
\label{fig-C4}
\end{figure}

Without loss of generality,
we assume further that $x_{u_1} \ge x_{u_2}$.
By Lemma \ref{lem-22}, removing the vertices that are adjacent only to $u_2$, and adding them adjacent to $u_1$,
we get a new graph $G_8$ with larger spectral radius
than $G_7$;
see Figure \ref{fig-C4}. Finally,
some tedious computations can yield  $\lambda (G_8) <
 \lambda(K_{\frac{n}{2}, \frac{n}{2}}^+)$.
Indeed, let $a=|N_A(u_1) \cap N_A(u_2)|$
be the number of vertices of $A$ adjacent to both $u_1$ and $u_2$,
let $b=|N_A(u_1)\setminus N_A(u_2)|$ be the
number of vertices of $A$ adjacent only to $u_1$,
and let $c$ be the number of  isolated vertices of $G[A]$.
Then $a+b+c=\frac{n}{2}-1$ and $2a+b=e(A)=\frac{n}{2}$.
Upon computation, we obtain that $\lambda (G_8)$
is the largest root of
\begin{equation*}
\begin{aligned}
g_8(x)&:=x^7- (\tfrac{a+b+c}{2}n + a+2-c)x^5 -
(4a+2b)x^4 \\
&\quad + ((a+b+c)n+ \tfrac{ab}{2}n +ac n + \tfrac{bc}{2}n -3b-4c -2ac -bc-4a  )x^3 \\
&\quad + 2abx^2 + (2ab+2bc +abc - \tfrac{ab}{2}n
-\tfrac{bc}{2}n - \tfrac{abc}{2}n)x.
\end{aligned}
\end{equation*}
It is easy to calculate that
for every $x^2 > \frac{n^2}{4} +2$,
\begin{align*}
&\tfrac{1}{x}g_8(x) - x^3f(x) \\
&= x^5 + (\tfrac{n^2}{4} - \tfrac{a+b+c}{2}x + c-a-2)x^4
- (\tfrac{n^2}{4} -n +4a +2b) x^3 \\
& \quad + ((a+b+c)n + \tfrac{ab}{2}n +ac n + \tfrac{bc}{2}n
-3b -4c - 2ac - bc -4a)x^2 \\
&\quad + 2abx + (2ab+2bc +abc - \tfrac{ab}{2}n
-\tfrac{bc}{2}n - \tfrac{abc}{2}n)  \\
& >  (\tfrac{n^2}{4} - \tfrac{a+b+c}{2}x + c-a-2)x^4
+ (n+2-4a -2b) x^3 \\
&\quad + ((a+b+c)n + \tfrac{ab}{2}n +ac n + \tfrac{bc}{2}n
-3b -4c - 2ac - bc -4a)x^2 \\
&\quad + 2abx + (2ab+2bc +abc - \tfrac{ab}{2}n
-\tfrac{bc}{2}n - \tfrac{abc}{2}n) >0.
\end{align*}
Henceforth, we get $g_8(x) > x^4f(x)$
for every $x\ge  \lambda(K_{\frac{n}{2}, \frac{n}{2}}^+)$.
Furthermore, it follows that $g_8( \lambda(K_{\frac{n}{2}, \frac{n}{2}}^+)) >0$ and $g_8(x) >0$ for every $x\ge  \lambda(K_{\frac{n}{2}, \frac{n}{2}}^+)$, which implies $\lambda (G_8) <
 \lambda(K_{\frac{n}{2}, \frac{n}{2}}^+)$, a contradiction.
\end{proof}

\medskip
{\bf Claim 6.} Let $u_0$ be the central vertex of the star $K_{1,t^*}$ in $G[A]$.
Then $d_B(u_0) =0$.

\begin{proof}
The proof is similar with that of Part (c) in Claim 4.
Assume on the contrary that
there exists an integer
$i\in \{1,2,\ldots ,|A|\}$ and $w_i \in B_i$ such that
$u_0w_i$ is an edge in $G[B]$.
Since $d_A(w_i) = |A| -i$ and $b_0=0$,
we obtain by (\ref{eqeq-7}) that
 \begin{equation*}
 e(A,B) \le (|A| -1)|B| - (i-1).
 \end{equation*}

If $i\le t^*-1$, then by Claim 5,
there are at least $t^*-i \ge 1$ triangles
consisting of $u_0,w_i$ and a leaf vertex of the star $K_{1,t^*}$,
which leads to  $t>t^*$, contradicting with Claim 4.
Hence, we get $ i\ge t^*$.
Then
\[  e(A,B) \le (|A| -1)|B| -t^* +1. \]
 Furthermore, inequality (\ref{eqeq-10})
reduces to
\[ \lambda^2 \le |A| + (|A| -1)|B| +t^* +1
\le  |A| + (|A| -1)|B| +t +1
\le \lfloor \tfrac{n^2}{4}\rfloor +2, \]
which is a contradiction. Thus, we get $d_B(u_0)=0$.
\end{proof}

\medskip
{\bf Claim 7.} $e(B)=0$, that is, $B$ is an independent set in $G$.

\begin{proof}
Suppose on the contrary that
$\{w_i,w_j\}$ is an edge of $G[B]$,
where $w_i\in B_i$ and $w_j \in B_j$.
By Claim 4, we get $t=t^*$, which implies that
$N_A(w_i) \cap N_A(w_j) = \varnothing$.
By Claim 6, we know that $N_A(w) \subseteq
A \setminus \{u_0\}$ for every $w\in B$.
It follows that $(|A|-i) + (|A|-j) \le |A| -1$, which yields
$i+j \ge |A| +1$.
Recall in Claim 3 that $b_0=0$. Then
\[  e(A,B) \le |A||B| - (|B| -2) -i-j \le
|A||B| - |B| -|A| +1. \]
By (\ref{eqeq-10}), we get
\[  \lambda^2 \le |A| + 2t^* + e(A,B)
\le |A||B| - |B| +2t^* +1. \]
Since $|A||B| - |B|$ attains the maximum at
$|B| = \lfloor \frac{n}{2}\rfloor -1$
and $t^* \le \lfloor \frac{n}{2}\rfloor$, we get
$\lambda^2 \le \lceil \frac{n}{2}\rceil (\lfloor \frac{n}{2}\rfloor -1) - (\lfloor \frac{n}{2}\rfloor -1) + 2
\lfloor \frac{n}{2}\rfloor +1\le
\lfloor \frac{n^2}{4} \rfloor +2$, a contradiction.
\end{proof}

Finally,
we are ready to show that $G $ is the desired extremal graph $K_{\lceil \frac{n}{2} \rceil, \lfloor \frac{n}{2} \rfloor}^+$.

\medskip
{\bf Claim 8.}  $N_G(w)=A\setminus \{u_0\}$ for each $w\in B$.

\begin{proof}
By  Claims 4, 5, 6 and 7 above,
we know that all triangles in $G$ must contain
the vertex $u^*$, that is, $t^*=t\le \lfloor \frac{n}{2}\rfloor$.
Moreover, $G[A]$ consists of a star $K_{1,t^*}$ on the center $u_0$ together with $|A| -t^*-1$ isolated vertices.
Furthermore, we have $N_A(w)\subseteq A \setminus \{u_0\}$ for every $w\in B$.
In addition, we observe that both $A\setminus \{u_0\}$
and $B$ are independent sets, and
adding edges between $B$ and $A\setminus \{u_0\}$
does not increase the number of triangles.
In our assumption, $G$ is chosen as a graph with the maximum spectral radius,
Thus we obtain $N_G(w)=A\setminus \{u_0\}$ for each $w\in B$.
\end{proof}

Let $A_0:=\{u\in A: d_A(u)=0\}$ be the set of
isolated vertices in $G[A]$,
and denote by $A_1=A \setminus A_0$.
Then, we have $|A_1|=t+1$ and $G[A_1]=K_{1,t}$.

\medskip
{\bf Claim 9.} We obtain that $\sum_{u\in A_0} x_u < x_{u^*}$.

\begin{proof}
Otherwise,  if $\sum_{u\in A_0}  x_u \ge x_{u^*}$,
then we define a new graph $G'$ on vertex set $V(G')=V(G)$
with edge set
\[  E(G')=E(G) - \{u_0u^*\} +
\{u_0u : u\in A_0\}. \]
Observe that $G'$ is a bipartite graph  on
$n$ vertices with two color classes $
A\setminus \{u_0\}$ and $B \cup \{u^*,u_0\}$.
Then Nikiforov's result  implies
$\lambda (G') \le \lambda(T_{n,2})
= \sqrt{\lfloor {n^2}/{4}\rfloor}$.
On the other hand, we can show that
$  \lambda (G) \le \lambda (G')$.  Indeed, since $\bm{x}$ is unit, Rayleigh's formula gives
$\lambda(G') \ge \bm{x}^TA(G')\bm{x}$ and then
\[ \lambda (G') - \lambda (G)
\ge \bm{x}^TA(G')\bm{x} - \bm{x}^TA(G)\bm{x} =
2x_0 \left(\sum_{u\in A_0} x_u -x_{u^*} \right) \ge 0, \]
which yields $\lambda (G) \le \lambda (G')\le \sqrt{\lfloor {n^2}/{4}\rfloor}$,
a contradiction\footnote{In fact, it follows further that $\lambda(G) < \lambda (G')$
since $\bm{x}$ can not be a Perron vector of $G'$.}.
\end{proof}

\medskip
{\bf Claim 10.} $|A|=t+1$, that is, $A=A_1$ and $A_0=\varnothing$.

\begin{proof}
Note that Claim 8 gives $N_G(u)= \{u^*\} \cup B$ for every $u\in A_0$,
which implies $\lambda x_u = x_{u^*} + \sum_{w\in B} x_w$.
By Claim 9, it follows  that
\[   \lambda x_{u^*} > \sum_{u\in A_0} \lambda x_u = |A_0| x_{u^*} + |A_0| \sum_{w\in B} x_w.\]
Thus, by Claim 8 again, we obtain
\[  \sum_{w\in B} d_A(w) x_w
= (|A| -1) \sum_{w\in B} x_w < (|A| -1) \frac{\lambda - |A_0|}{|A_0|} x_{u^*} \le
 \frac{(|A| -1)(t+1)}{|A|-t-1} x_{u^*},  \]
 where the last inequality holds by
 $\lambda \le |A|$ and $|A_0| = |A| -t-1$.
 Combining this
 with (\ref{eqeq-6}) and (\ref{eq-2eA}),
 we have
 \begin{equation} \label{eqeq-haa}
   \lambda^2 x_{u^*}
 \le |A|x_{u^*} +2tx_{u^*} +
 \sum_{w\in B} d_A(w)x_w
< \left( |A| +2t+  \frac{(|A| -1)(t+1)}{|A|-t-1}  \right) x_{u^*} .
\end{equation}
For convenience, we denote
\[ h(|A|) := |A| +2t + \frac{(|A| -1)(t+1)}{|A|-t-1}. \]
Clearly, we have $|A| \ge |A_1|= t+1$.
Next, we proceed the proof in three cases.

\medskip
{\bf Case 1.}
$|A| \ge t+4$.

In this case,
to obtain a contradiction, we shall show that
$\lambda^2 \le \lfloor \frac{n^2}{4} \rfloor +2$.
Since $|A| \in [t+4, n-2]$ and $h(|A|)$ is convex in terms of  $|A|$,
we get from (\ref{eqeq-haa}) that
\begin{equation} \label{ha}
\lambda^2< h(|A|) \le \max\{h(t+4), h(n-2)\}.
 \end{equation}
To determine the maximum in (\ref{ha}),
we divide the proof in two subcases.

{\bf Subcases 1.1.} $t\le \lfloor \frac{n}{2}\rfloor -1$.
By computation, we get
\[ h(t+4) = \frac{1}{3}t^2 + \frac{13}{3}t+5
\le \frac{1}{3} \left(\lfloor \frac{n}{2}\rfloor -1 \right)^2
+ \frac{13}{3}\left(\lfloor \frac{n}{2}\rfloor -1 \right) +5.   \]
One can check that $h(t+4) \le \lfloor \frac{n^2}{4}\rfloor +2$ for every even $n\ge 12$ and odd $n\ge 9$.
On the other hand, we have
\[ h(n-2)
=(n-2) +2t + \frac{(n-3)(t+1)}{n-t-3}
 \le n-2 + 2 \lfloor \frac{n}{2}\rfloor -2
+ \frac{(n-3)\lfloor \frac{n}{2} \rfloor}{ n -\lfloor \frac{n}{2}\rfloor -2},  \]
where the inequality holds due to $t\le \lfloor \frac{n}{2}\rfloor -1$.
Then we can  verify that
$h(n-2) \le \lfloor \frac{n^2}{4}\rfloor +2$ for every $n$.
To sum up, for $n=9$ and $n\ge 11$,
we get from (\ref{ha}) that $\lambda^2 \le h(|A|)
\le \lfloor \frac{n^2}{4}\rfloor +2$, a contradiction.
Next, we consider the remaining cases $n\le 8$ and $n=10$.
By Claim 2, we have $t\ge 1$ and $ |B|\ge 1$.
Recall that $|A| \ge t+4 \ge 5$ and $|A| \le n-2$.
Thus, we get $n\in \{7,8,10\}$.
For $n=7$, we must have $t=1$, $|A|=5$ and $|B|=1$.
Since Claim 4 gives $t^*=t$,
Claim 8 implies $e(A,B)= (|A|-1)|B|$,
then by (\ref{eqeq-10}), we get
\begin{equation} \label{eqeq-10-f}
\lambda^2 \le |A| +2t + (|A| -1)|B|.
\end{equation}
Consequently, for $n=7$, we obtain $\lambda^2 \le 11< \lfloor \frac{n^2}{4}\rfloor +2$, a contradiction.
For $n=8$, we have $(t,|A|)=(2,6),(1,5)$ or $(1,6)$.
For $n=10$, we have
$(t,|A|)=(4,8),(3,7),(3,8)$, $(2,6),(2,7),(2,8)$,
$(1,5),(1,6),(1,7),(1,8)$. Similarly, using (\ref{eqeq-10-f}), we can verify that $\lambda^2 < \lfloor \frac{n^2}{4}\rfloor +2$. This is a contradiction.

{\bf Subcases 1.2.} $t= \lfloor \frac{n}{2}\rfloor$.
In this case, we have
\[ h(t+4)\le \frac{1}{3} \lfloor \frac{n}{2}\rfloor ^2
+ \frac{13}{3} \lfloor \frac{n}{2}\rfloor  +5  \]
 It is easy to verify that $h(t+4)
 \le \lfloor \frac{n^2}{4}\rfloor +2$ holds for even $n\ge 16$
 and odd $n\ge 13$.

  Moreover, we have
  \[  h(n-2) \le n-2 + 2 \lfloor \frac{n}{2}\rfloor
+ \frac{(n-3)(\lfloor \frac{n}{2} \rfloor +1)}{ n -\lfloor \frac{n}{2}\rfloor -3}.\]
One can check that
 $h(n-2) \le \lfloor \frac{n^2}{4}\rfloor +2$ for every $n\ge 13$.
 Therefore, for odd $n\ge 13$ and even $n\ge 16$,
 we get $\lambda^2 \le h(|A|) \le  \lfloor \frac{n^2}{4}\rfloor +2$, a contradiction.
In addition, we have $n-2\ge |A| \ge t+4=\lfloor \frac{n}{2}\rfloor +4$, which
gives rise to $n\ge 11$, and then $n\in \{11,12,14\}$.
 For $n=11$, we have $t=5$, $|A|=9$ and $|B|=1$.
 By (\ref{eqeq-10-f}), we get $\lambda^2 \le 27 < \lfloor \frac{n^2}{4}\rfloor +2 $, a contradiction.
For $n=12$, we have $t=6$, $|A|=10$ and $|B|=1$,
which leads to $\lambda^2 \le 31 <  \lfloor \frac{n^2}{4}\rfloor +2 $, a contradiction.
For $n=14$, we have $t=7$,  and $(|A|,|B|)=(11,2)$ or
$(12,1)$, which yields anyway that
$\lambda^2 <  \lfloor \frac{n^2}{4}\rfloor +2 $, a contradiction.

\medskip
{\bf Case 2.}
$|A| = t+3$.

By (\ref{eqeq-haa}), we need to calculate the value $h(|A|)$.
If $t\le \lfloor \frac{n}{2}\rfloor -1$,
then
\[  h(|A|)=h(t+3)= \frac{1}{2}t^2 + \frac{9}{2}t +4
\le \frac{1}{2}\left(\lfloor \frac{n}{2}\rfloor -1 \right)^2 +
\frac{9}{2}\left(\lfloor \frac{n}{2}\rfloor -1 \right) +4. \]
For even $n\ge 14$
and odd $n\ge 9$, one can verify that
$h(t+3)\le \lfloor \frac{n^2}{4} \rfloor +2$, a contradiction.
Since $4\le |A|=t+3 \le n-2$, we have $n\ge 6$. Thus,
we need to check the remaining case $n\in \{6,7,8,10,12\}$.
For $n=6$, we have $t=1$ and $|A|=4$,
it follows from (\ref{eqeq-10-f}) that $\lambda^2 \le
9< \lfloor \frac{n^2}{4} \rfloor +2$.
For $n=7$, we have $(t,|A|)=(1,4)$ or $(2,5)$.
Using (\ref{eqeq-10-f}), we get
$ \lambda^2 < 13 < \lfloor \frac{n^2}{4} \rfloor +2$, a contradiction.
For $n=8$, we have $(t,|A|)=(1,4),(2,5)$ or $(3,6)$, in which one can verify by (\ref{eqeq-10-f}) that
$ \lambda^2 \le \lfloor \frac{n^2}{4} \rfloor +2$.
Moreover, the similar argument also holds for $n=10$ and $n=12$, respectively. This is a contradiction.

If $t=\lfloor \frac{n}{2}\rfloor$, then
$h(|A|)=h(t+3)= \frac{1}{2}\lfloor \frac{n}{2}\rfloor^2 +
\frac{9}{2} \lfloor \frac{n}{2}\rfloor +4$.
By computation, we know that
$h(|A|) \le \lfloor \frac{n^2}{4}\rfloor +2$
for even $n\ge 20$ and odd $n\ge 17$,
which leads to $\lambda^2 \le \lfloor \frac{n^2}{4} \rfloor +2$, a contradiction.
Next, we will check the exceptions for $n\le 16$ and $n=18$.
Since $n-2 \ge |A| = t+3=\lfloor \frac{n}{2}\rfloor +3$,
we have $n\ge 9$, and the following cases in Table \ref{tab-A=t3}.

\begin{table}[H]
\centering
\begin{tabular}{ccccccccccc}
\toprule
 $n$ & 9 &  10 & 11 & 12 & 13 & 14 & 15 & 16  & 18 \\
\midrule
 $t$ & 4 & 5 & 5 & 6 &  6 & 7 & 7 & 8  & 9 \\
 $|A|$ & 7  & 8 & 8 & 9 & 9 & 10 & 10 &11 &  12 \\
 $|B|$ & 1 & 1 & 2 & 2 & 3 & 3 & 4 & 4 &  5 \\
\bottomrule
\end{tabular}
\caption{Some examinations in the case
$|A|=t+3$ and $t=\lfloor \frac{n}{2}\rfloor$.}
 \label{tab-A=t3}
\end{table}
For $n\in \{9,10,11,12,14\}$,
one can directly verify by (\ref{eqeq-10-f})
that $\lambda^2 \le \lfloor \frac{n^2}{4} \rfloor +2$,
which is a contradiction.
For the remaining cases $n\in \{13,15,16,18\}$,
to deduce a contradiction, we need to refine the bound in  (\ref{eqeq-10-f}).
Since $G[A] = K_{1,t}\cup 2K_1$ and $u_0$ is the center, we have
\begin{equation} \label{eqeqeq-1}
  \sum_{u\in A}d_A(u)x_u = tx_{u_0} + \sum_{u\in N_A(u_0)} x_u
\le  tx_{u_0} + tx_{u^*} .
\end{equation}
Recall that
$N_G(w) = A \setminus \{u_0\}$
for every $w\in B$.
We get $ \lambda x_w = \sum_{u\in A\setminus \{u_0\}}x_u
=\lambda x_{u^*} - x_{u_0}$ and then $x_w= x_{u^*} - \frac{x_{u_0}}{\lambda}$.
Therefore, we obtain
\begin{equation} \label{eqeqeq-2}
  \sum_{w\in B} d_A(w)x_w= (|A| -1)|B|x_w
= (|A| -1)|B| \left(x_{u^*} - \frac{x_{u_0}}{\lambda} \right).
\end{equation}
Then by (\ref{eqeq-6}), (\ref{eqeqeq-1}) and (\ref{eqeqeq-2}), we get
\[ \lambda^2 x_{u^*} \le |A| x_{u^*}
 + tx_{u^*} +(|A|-1)|B| x_{u^*} +
 \left(t- \frac{(|A| -1)|B|}{\lambda} \right) x_{u_0}.  \]
Recall that $\lambda \le |A|$. It follows that
\begin{equation} \label{eqeq-refine}
  \lambda^2 \le |A| + 2t + (|A| -1) |B|
- \frac{(|A| -1)|B|}{|A|}.
\end{equation}
Clearly, the upper bound in (\ref{eqeq-refine}) refines that in (\ref{eqeq-10-f}).
In particular, for $n=13$,
we get from Table \ref{tab-A=t3} that $t=6$, $|A|=9$ and $|B|=3$. Then
(\ref{eqeq-refine}) yields
$\lambda^2 \le 45- \frac{24}{9}  <
\lfloor \frac{n^2}{4}\rfloor +2$.
For $n=15$, by (\ref{eqeq-refine}), we have
 $\lambda^2 \le 60 - 3.6 <
\lfloor \frac{n^2}{4}\rfloor +2 $.
Similarly, for $n=16$ and $n=18$,
 some computations also lead to $\lambda^2 <
\lfloor \frac{n^2}{4}\rfloor +2 $.
This is a contradiction.

\medskip
{\bf Case 3.} $|A| = t+2$.
We present the proof in two subcases.

{\bf Subcase 3.1.} $n$ is even.
Then $t\le \frac{n}{2}$.
Recall in Claim 1 that $|A| \ge \frac{n}{2} +1$.
Hence, we have two possibilities, namely,
$(|A|,t)= (\frac{n}{2} +1, \frac{n}{2}-1)$
or $(\frac{n}{2}+2, \frac{n}{2})$.
In the former case, we have $|B|=n-1-|A|=\frac{n}{2}-2$.
Using (\ref{eqeq-refine}), we obtain
\[ \lambda^2 \le \frac{n^2}{4} + \frac{n}{2} -1
- \frac{\frac{n}{2}(\frac{n}{2} -2)}{\frac{n}{2}+1}
< \frac{n^2}{4} +2.  \]
For the latter case, we have $|B|=\frac{n}{2}-3$.
We remark  that it is not sufficient to apply the bound (\ref{eqeq-refine}) only. In this case, the structure of $G$ is uniquely determined.
By a direct calculation,
we know that $ \lambda (G)$ is the largest root of
\[ p_1(x) :=x^5 + (1-\tfrac{n^2}{4} )x^3 - nx^2
+ \tfrac{n^2}{2}x -2nx -3x.  \]
By Lemma \ref{lem-21},
$\lambda (K_{\frac{n}{2}, \frac{n}{2}}^+)$
is the largest root of
$f(x)= x^3 -x^2 - \frac{n^2}{4} x+ \frac{n^2}{4} -n$.
Then
\begin{align*}
p_1(x) - x^2 f(x) =
x^4 +x^3 -\tfrac{n^2}{4}x^2 +\tfrac{n^2}{2}x -2nx -3x .
\end{align*}
One can verify that
$p_1(x) > x^2f(x)$ for every $x\ge \frac{n}{2}$.
Since $\lambda (K_{\frac{n}{2}, \frac{n}{2}}^+)
> \lambda (K_{\frac{n}{2}, \frac{n}{2}})= \frac{n}{2}$, it follows that $p_1(\lambda (K_{\frac{n}{2}, \frac{n}{2}}^+)) >0$. Moreover,
$p_1(x) >0$ for every $x\ge \lambda (K_{\frac{n}{2}, \frac{n}{2}}^+)$.
Thus, we obtain $\lambda (G) < \lambda (K_{\frac{n}{2}, \frac{n}{2}}^+)$. This contradicts with
the assumption.

\medskip
{\bf Subcase 3.2.} $n$ is odd.
In this subcase,
Claim 1 implies $|A| \ge \frac{n+1}{2}$
and $t\le \frac{n-1}{2}$.
Thus, there are two possibilities, more precisely,
$(|A|, t)=(\frac{n+1}{2}, \frac{n-1}{2} -1)$ or
$(\frac{n+1}{2}+1, \frac{n-1}{2})$.
In the first case,
we have $|B|=\frac{n-1}{2} -1$.
Applying (\ref{eqeq-refine}),
we get
\[ \lambda^2 \le
\frac{n^2-1}{4} +\frac{n-3}{2} -
 \frac{(\frac{n+1}{2}-1)(\frac{n-1}{2}-1)}{\frac{n+1}{2}}
< \frac{n^2-1}{4}  +1.  \]
For the second case,
we have $|B|=\frac{n-1}{2}-2$.
Nevertheless, it is also not sufficient to use (\ref{eqeq-refine}).
By computation, we obtain that
$\lambda (G)$ is the largest root of
\[  p_2(x) := x^5 + \tfrac{1-n^2}{4}x^3 + (1-n)x^2
 +\tfrac{n^2}{2}x - 2nx - \tfrac{1}{2}x. \]
In view of Lemma \ref{lem-21}, we know that
$\lambda(K_{\frac{n+1}{2}, \frac{n-1}{2}}^+)$
is the largest root of
$ g(x)=x^3 - x^2 + \frac{1-n^2}{4} x +
\frac{n^2}{4} - n + \frac{3}{4}$.
Upon computation, we get
\[  p_2(x)- x^2g(x) =
x^4 +\tfrac{1}{4}x^2 - \tfrac{n^2}{4}x^2 +
\tfrac{n^2}{2}x -2nx -\tfrac{x}{2}. \]
It is easy to check that
$p_2 (x)> x^2g(x)$ for every $x\ge \frac{n-1}{2}$.
Observe that $\lambda(K_{\frac{n+1}{2}, \frac{n-1}{2}}^+)
> \lambda (T_2(n))> \frac{n-1}{2}$.
It follows that $p_2(\lambda(K_{\frac{n+1}{2}, \frac{n-1}{2}}^+)) >0$ and moreover
$p_2(x) >0$ for every $x\ge \lambda(K_{\frac{n+1}{2}, \frac{n-1}{2}}^+)$.
Thus, we get $\lambda (G) < \lambda(K_{\frac{n+1}{2}, \frac{n-1}{2}}^+)$, contradicting with the assumption.
\end{proof}

\medskip
From the  above discussions,
we conclude that $|A| = t+1$.
If $n$ is even, then Claim 1 gives $|A| \ge \frac{n}{2}+1$.
Note that $|A| = t+1 \le \frac{n}{2}+1$.
Thus, we must have $|A| = \frac{n}{2} +1, t=\frac{n}{2}$
and $|B|=\frac{n}{2} -2$. Putting $\{u^*,u_0\}$
and $B$ together, one can observe that
$G=K_{\frac{n}{2},\frac{n}{2}}^+$,
which is the desired extremal graph.
If $n$ is odd, then Claim 1 implies
$|A| \ge \frac{n+1}{2}$.
Moreover, we have $t\le \frac{n-1}{2}$ and $|A| =t+1 \le
\frac{n+1}{2}$. Therefore, we obtain
$|A| = \frac{n+1}{2}, t=\frac{n-1}{2}$ and
$|B|=\frac{n-1}{2} -1$.
Moving $\{u^*,u_0\}$ and $B$ together,
we see that $G=K_{\frac{n+1}{2}, \frac{n-1}{2}}^+$,
as desired.
This completes the proof.
\end{proof}

\section{Concluding remarks}

\label{sec5}

In the Concluding remarks of \cite{NZ2021}, Ning and Zhai wrote that
for the case of triangles, is there some interesting
phenomenon when we consider the relationship between the number of triangles
and signless Laplacian spectral radius, Laplacian spectral radius, distance spectral
radius and etc?

\medskip 
We answer here that
the counting result of triangles does {\sc not} hold for the signless Laplacian spectral radius.
A result of He, Jin and Zhang \cite{HJZ2013} implies that
if $G$ is a triangle-free graph on $n$ vertices,
then the signless Laplacian radius $q(G) \le n$,
with equality  if and only if $G$
is a complete bipartite graph.
However, for an $n$-vertex graph $G$ with $q(G) >n$,
it is possible that $G$ has exactly one triangle.
Indeed,  adding one edge into the independent set of
$K_{n-1,1}$, we get a new graph, denote $K_{n-1,1}^+$, which  satisfies $q(K_{n-1,1}^+)
> n$, but $t(K_{n-1,1}^+)=1$.
Although the counting result  in terms of $q(G)$
does not hold for triangles,
it seems possible that the spectral counting result
is feasible for cliques $K_{r+1}$ for every  $r\ge 3$,
since it was proved in \cite{HJZ2013}  that for $r\ge 3$,
the $r$-partite Tur\'{a}n graph $T_{n,r}$ is the unique $K_{r+1}$-free graph attaining the maximum of
the signless Laplacian spectral radius.

\medskip 
A well-known result of Nikiforov \cite{Niki2006}
implies that if $G$ is a triangle-free graph with
$m$ edges, then $\lambda (G) \le \sqrt{m}$,
and equality holds if and only if $G$ is a complete bipartite
graph (possibly with some isolated vertices).
Furthermore, Ning and Zhai \cite{NZ2021}
proved a counting result, which asserts that if
$  \lambda (G)\ge \sqrt{m}$,
then $G$ has at least $\lfloor \frac{\sqrt{m}-1}{2} \rfloor$  triangles, unless $G$ is a  complete bipartite graph.
Correspondingly, we illustrate that the counting result is invalid for graphs with given size
in terms of the signless Laplacian radius.
Indeed, it is not difficult to show that
if $G$  is a graph with $m\ge 4$ edges,
then $q(G) \le m+1$, where the equality holds if and only if $G$ is a star $K_{1,m}$.
Consequently, for a triangle-free graph $G$,
it follows trivially that $q(G) \le m+1$,
with equality  if and only if $G=K_{1,m}$.
These results can also be deduced
from a theorem of Zhai, Xue and Lou \cite{ZXL2020LAA}.
With the help of these results,
we obtain that if $G$ satisfies $q(G) \ge m+1$,
then $G=K_{1,m}$, which has no triangle. Moreover,  there  does not exist a graph $G$ with $m$ edges
satisfying $q(G) > m+1$.

\medskip

Let us mention a couple of problems related to our result.
Theorem \ref{thm-LLP}
has confirmed Conjecture \ref{conj-main}
 in the base case $q=1$ for triangles, and it can also be viewed as
 a spectral version of Theorem \ref{thmrad}.
In addition, one can also consider the case $q=1$ in Conjecture \ref{conj-main}
 by counting odd cycles.

\begin{problem}[Counting five-cycles]
If $G$ is an $n$-vertex graph with
\[  \lambda (G)
\ge \lambda (K_{\lceil \frac{n}{2} \rceil, \lfloor \frac{n}{2} \rfloor}^+), \]
then $G$ contains at least $\lfloor \frac{n}{2}\rfloor (\lfloor \frac{n}{2}\rfloor  -1)
(\lceil \frac{n}{2}\rceil  -2)$ copies of $C_5$.
\end{problem}

 Next, we intend to propose a possible generalization of Theorem \ref{thm-LLP}.
In 1962, Erd\H{o}s \cite{Erd1962a,Erd1962b}
 showed that there exists a small constant
 $c>0$ such that if $n$ is large enough and $1\le q<cn$, then
 every $n$-vertex graph with $\lfloor n^2/4\rfloor +q$
 edges   has at least $q\lfloor \frac{n}{2}\rfloor$ triangles.
 Moreover, Erd\H{o}s conjectured $c=\frac{1}{2}$, which was  confirmed by
Lov\'{a}sz and Simonovits \cite{LS1975,LS1983}.

\begin{theorem}[Lov\'{a}sz--Simonovits \cite{LS1975,LS1983}, 1975] \label{thm-LS1975}
Let $1\le q <\frac{n}{2}$ be a positive integer and $G$ be an $n$-vertex graph with
\[ e(G)\ge \left\lfloor \frac{n^2}{4}  \right\rfloor + q.\]
Then $G$ contains at least $q \lfloor \frac{n}{2}\rfloor $
triangles.
\end{theorem}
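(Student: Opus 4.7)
The plan is to induct on $q$, using Theorem~\ref{thmrad} (the $q=1$ case of Erd\H{o}s--Rademacher) as the base. For the inductive step, suppose the statement holds for $q-1$ (with $2\le q<n/2$) and let $G$ be an $n$-vertex graph with $e(G)\ge \lfloor n^2/4\rfloor+q$. The key step is to locate an edge $uv\in E(G)$ lying in at least $\lfloor n/2\rfloor$ triangles. Given such an edge, $G-uv$ still has at least $\lfloor n^2/4\rfloor+(q-1)$ edges, so the inductive hypothesis yields $t(G-uv)\ge (q-1)\lfloor n/2\rfloor$, and then
\[ t(G)=t(G-uv)+|N_G(u)\cap N_G(v)|\ge (q-1)\lfloor n/2\rfloor+\lfloor n/2\rfloor=q\lfloor n/2\rfloor, \]
as required.

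The central ingredient is therefore the following \emph{book lemma}: every $n$-vertex graph with at least $\lfloor n^2/4\rfloor+q$ edges and $1\le q<n/2$ contains some edge in at least $\lfloor n/2\rfloor$ triangles. To prove it, I would fix a maximum cut $(A,B)$ of $G$ with $|A|\ge |B|$. Since $e(A,B)\le |A||B|\le \lfloor n^2/4\rfloor$, the graph has $e(A)+e(B)\ge q\ge 1$ internal edges. Picking such an internal edge $uv$, say $u,v\in A$, the max-cut inequality $d_B(w)\ge d_A(w)$ together with the near-completeness of the bipartite part (because $e(G)$ is within $q<n/2$ of $e(T_{n,2})$) forces $|A|,|B|$ to be very close to $\lceil n/2\rceil,\lfloor n/2\rfloor$ and forces $N_B(u)\cap N_B(v)$ to exhaust essentially all of $B$, producing the required book of size at least $\lfloor n/2\rfloor$.

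The main obstacle is delivering the book lemma \emph{sharply}: a naive stability count yields only $\lfloor n/2\rfloor-O(q)$ common neighbors for a typical internal edge, not the exact $\lfloor n/2\rfloor$ needed to drive the induction. To close this gap I would replace the inductive step by a direct extremal analysis: let $G^{*}$ minimize $t(G^{*})$ among all $n$-vertex graphs with exactly $\lfloor n^2/4\rfloor+q$ edges. A sequence of Zykov-style symmetrizations (replacing the neighborhood of a vertex $v$ by that of a non-adjacent vertex $u$ whenever $t_u\le t_v$) weakly decreases the triangle count while preserving the edge count, driving $G^{*}$ to contain a spanning $T_{n,2}$ with the extra $q$ edges confined to the larger part $A$. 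At that stage each such extra edge contributes exactly $|B|=\lfloor n/2\rfloor$ triangles with vertices of $B$, giving $t(G^{*})=q\lfloor n/2\rfloor+t(H)\ge q\lfloor n/2\rfloor$, where $H$ is the subgraph of extra edges (the minimum $t(H)=0$ being realized e.g.\ by the star $K_{1,q}$). Executing the symmetrizations so that they both preserve $e(G^{*})$ and never increase $t(G^{*})$, and verifying that they terminate in the structure $T_{n,2}+K_{1,q}$, is the technical heart of the argument.
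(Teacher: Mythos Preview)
The paper does not prove Theorem~\ref{thm-LS1975}; it is merely quoted (with citation to \cite{LS1975,LS1983}) in the concluding remarks as motivation for a spectral problem. So there is no proof in the paper to compare against, and your proposal has to stand on its own.

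It does not. Your book lemma --- that every $n$-vertex graph with $\lfloor n^2/4\rfloor+q$ edges ($1\le q<n/2$) contains an edge in at least $\lfloor n/2\rfloor$ triangles --- is false. For $n=6$, take $K_{2,2,2}$ and delete two independent edges: the result has $10=\lfloor 36/4\rfloor+1$ edges, yet every edge of $K_{2,2,2}$ lies in exactly $2$ triangles, and deleting edges cannot raise this, so no edge is in $\ge 3=\lfloor n/2\rfloor$ triangles. Similar subgraphs of $K_{k,k,k}$ give counterexamples for every $k\ge 2$. You sense trouble (``the main obstacle is delivering the book lemma sharply''), but the obstacle is not sharpness --- the statement is simply wrong, so the induction cannot be driven this way.

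Your fallback symmetrization is also broken as written. Replacing $N(v)$ by $N(u)$ for non-adjacent $u,v$ changes the edge count by $d(u)-d(v)$; it does \emph{not} ``preserve the edge count'' unless you also impose $d(u)=d(v)$, which you do not. With that extra restriction the move is legal, but then there is no argument that the process terminates at $T_{n,2}$ plus a star in one part (and in general it will not: complete multipartite graphs other than $T_{n,2}$ are already fixed points). The genuine Lov\'asz--Simonovits proof requires a considerably more delicate structural analysis of a triangle-minimizing graph than a one-line Zykov move; in particular, establishing that an extremal $G$ contains a spanning $T_{n,2}$ is essentially the whole difficulty, not a consequence of routine symmetrization.
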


Incidentally, we point out that the condition $q < \frac{n}{2}$ is necessary,
since for even $n$ and $q = \frac{n}{2}$, one can add $ q +1$ extra edges
to the larger vertex part of the complete bipartite graph $K_{\frac{n}{2}+1, \frac{n}{2}-1}$ such that there is no triangle among these $q+1$ edges. Hence, we  get
a  graph with $\frac{n^2}{4} +q$ edges and
$(q+1)(\frac{n}{2} -1)=\frac{n^2}{4}-1$ triangles,
which is less than $q\lfloor \frac{n}{2}\rfloor $.
It is natural to propose a spectral version of Theorem \ref{thm-LS1975}.
Recall that $T_{n,2,q}$
is the graph obtained from
$T_{n,2}$
by embedding a star $K_{1,q}$ into the larger vertex part.
This suggests the following question.

\begin{problem}
Let $1\le q < \frac{n}{2}$ and $G$ be an $n$-vertex graph with
\[  \lambda (G)\ge \lambda (T_{n,2,q}) . \]
Then $ t(G) \ge q \lfloor \frac{n}{2}\rfloor $,
with equality  if and only if $G=T_{n,2,q}$.
\end{problem}

Finally, we conclude the spectral supersaturation result for all general graphs.
In 2009, Nikiforov \cite{Niki2009cpc} proved a spectral Erd\H{o}s--Stone--Bollob\'{a}s theorem, which implies that
for every graph $F$ with $\chi (F)=r+1$ and $\varepsilon >0$, there exists $n_0$ such that if $G$ is a graph on $n\ge n_0$ vertices
and $\lambda (G) \ge (1-\frac{1}{r} + \varepsilon )n$, then $G$ contains a copy of $F$.
More generally, we can extend this result as follows. 

\medskip 
\noindent 
{\bf Fact.}  
{\it For any $\varepsilon >0$, there exist $\delta>0$ and $n_0$ 
such that if $G$ has $n\ge n_0$ vertices and 
\[  \lambda (G)\ge \left(1- \frac{1}{r} + \varepsilon \right)n, \] 
 then 
 $G$ contains at least $\delta n^{f}$ copies of $F$, 
 where $f$ is the number of vertices of $F$. } 
 
 \medskip 
This spectral supersaturation can be guaranteed by the celebrated graph removal lemma, which states that
for every graph $F$ and  $\varepsilon >0$,
there exists $\delta =\delta (F, \varepsilon) >0$ such that
any graph on $n$ vertices with at most $\delta n^f$
copies of $F$ can be made $F$-free
by removing at most $\varepsilon n^2$ edges; see the comprehensive survey \cite{CF2013}.
Suppose that $G$ satisfies $\lambda (G) \ge (1-\frac{1}{r} + \varepsilon )n$
and $G$ has less than $\delta (F, {\varepsilon^2}/{8})\cdot n^f$ copies of $F$.
Then the graph removal lemma yields that
we can remove at most ${\varepsilon^2}n^2/8$ edges from $G$
such that the remaining subgraph $G_1$ is $F$-free.
Another famous result due to Erd\H{o}s, Frankl and R\"{o}dl \cite[Theorem 1.5]{EFR1986} states that if $n\ge n_0(F, {\varepsilon^2}/{8})$ and $G_1$ is an $n$-vertex $F$-free graph,  then we can remove at most ${\varepsilon^2} n^2/8$ edges from $G_1$
so that the resulting graph $G_2$ is $K_{r+1}$-free.
The Wilf theorem (see \cite{Wil1986} or \cite{Niki2002cpc}) gives that
$\lambda (G_2) \le (1-\frac{1}{r})n$. Consequently,
the Rayleigh formula gives
\begin{align*}
  \lambda (G) & \le
\lambda (G_2) + \lambda (G_1 \setminus G_2) + \lambda (G \setminus G_1)  \\
&<  \lambda (G_2) +  \sqrt{2e(G_1 \setminus G_2)}  +
\sqrt{2e(G \setminus G_1)}  \\
& \le \left( 1- \frac{1}{r} \right) n + \varepsilon n,
\end{align*}
a contradiction. Hence $G$ contains at least
$\delta(F, {\varepsilon^2}/{8})\cdot n^{f}$ copies
of $F$, as desired.

\subsection*{Acknowledgements}
This work was supported by the NSFC grant (Nos. 11931002 and 12371362),
the Natural Science Foundation of Hunan
Province (No. 2021JJ40707) and the Postdoctoral Fellowship Program of CPSF (No. GZC20233196).

\frenchspacing

\end{document}